\newtheorem{thm}{Theorem}
\newcommand{\bt}{\begin{thm}}
\newcommand{\et}{\end{thm}}
\newtheorem{ex}[thm]{Example}
\newtheorem{cor}[thm]{Corollary}
\newcommand{\bc}{\begin{cor}}
\newcommand{\ec}{\end{cor}}
\newtheorem{lem}[thm]{Lemma}  
\newcommand{\bl}{\begin{lem}}
\newcommand{\el}{\end{lem}}
\newtheorem{prop}[thm]{Proposition}
\newcommand{\bp}{\begin{prop}}
\newcommand{\ep}{\end{prop}}
\newtheorem{defn}[thm]{Definition}
\newcommand{\bd}{\begin{defn}}   
\newcommand{\ed}{\end{defn}}
\newtheorem{rmrk}[thm]{Remark}   
\newcommand{\br}{\begin{rmrk}}
\newcommand{\er}{\end{rmrk}}
\newcommand{\VFto}{\stackrel {\mathcal{VF}}{\longrightarrow} }
\newcommand{\SIFto}{\xrightarrow{\mathcal{SIF} }}
\newcommand{\Ric}{\operatorname{Ric}}
\newcommand{\be}{\begin{equation}}
\newcommand{\ee}{\end{equation}}
\newcommand{\N}{\mathbb{N}}
\newcommand{\diam}{\operatorname{diam}}
\newcommand{\area}{\operatorname{Area}}
\newcommand{\vol}{\operatorname{Vol}}
\DeclareMathOperator{\tr}{tr}
\let\oldmarginpar\marginpar
\renewcommand\marginpar[1]{\-\oldmarginpar[\raggedleft\footnotesize #1]%
{\raggedright\footnotesize #1}}
\newcommand\bel[1]{\begin{equation}\label{#1}}
\renewcommand\ee{\end{equation}}
\newcommand{\Sig}{\Sigma}
\newcommand{\mS}{\mathcal{S}}
\newcommand{\mM}{\mathcal{M}}
\begin{document}
	\mainmatter              % start of a contribution
	\title{Lorentzian area and volume estimates for integral mean curvature bounds}
	\titlerunning{Lorentzian area and volume estimates for integral mean curvature bounds}  % abbreviated title (for running head)
	%                                     also used for the TOC unless
	%                                     \toctitle is used
	%
	\author{Melanie Graf\inst{1} \and Christina Sormani\inst{2}}
	\authorrunning{Melanie Graf and Christina Sormani} % abbreviated author list (for running head)
	\institute{University of Tübingen\\
		\email{graf@math.uni-tuebingen.de} 
		\and
	Lehman College and CUNY Graduate Center\\
		\email{sormanic@gmail.com}}
	
	\maketitle

\begin{abstract} 
	In the present paper we establish area and volume estimates for spacetimes satisfying the strong energy condition in terms of the area  and the $L^n$-norm of the second fundamental form or the mean curvature of an initial Cauchy hypersurface. We believe that these estimates will lay some of the groundwork in establishing new convergence results for Cauchy developments $(M_j,g_j)$ of suitably converging initial data $(\Sigma_j,h_j,K_j)$.

\keywords{Lorentzian Geometry, volume estimates, integral mean curvature bounds}
\end{abstract}

\section{Introduction}

In 1952, Choquet-Bruhat proved that given an initial data set, $(\Sigma, h, K)$, 
consisting of a Riemannian manifold $(\Sigma, h)$ and a $(0,2)$ symmetric tensor field $K$ satisfying consistency conditions, there is a future Cauchy development $(M, g)$ where $M$ is a globally hyperbolic Lorentzian manifold satisfying the vacuum Einstein Equations with Cauchy hypersurface $\Sigma$
such that $h=g$ restricted to $\Sigma$ and $K$ is the second fundamental form of $\Sigma$ \cite{Choquet-Bruhat-52}.  See Figure~\ref{fig-init-data}.    In joint work with Geroch, she proved
there is a unique maximal development $(M,g)$ which contains all Cauchy developments \cite{CBG:69}.    Examples of such developments are the Minkowski and Schwarzschild spacetimes whose initial data sets are Euclidean and Riemannian Schwarzschild space both with $K=0$.  Similar results guaranteeing the existence of a unique maximal Cauchy development for given initial data have since been established for a variety of matter models. Examples of such developments  with compact initial data sets using a perfect-fluid matter model are the
FLRW spacetimes which are applied to model the cosmos and predict the big bang.

\begin{figure}
	\begin{center}
		\includegraphics[width=0.4\textwidth]{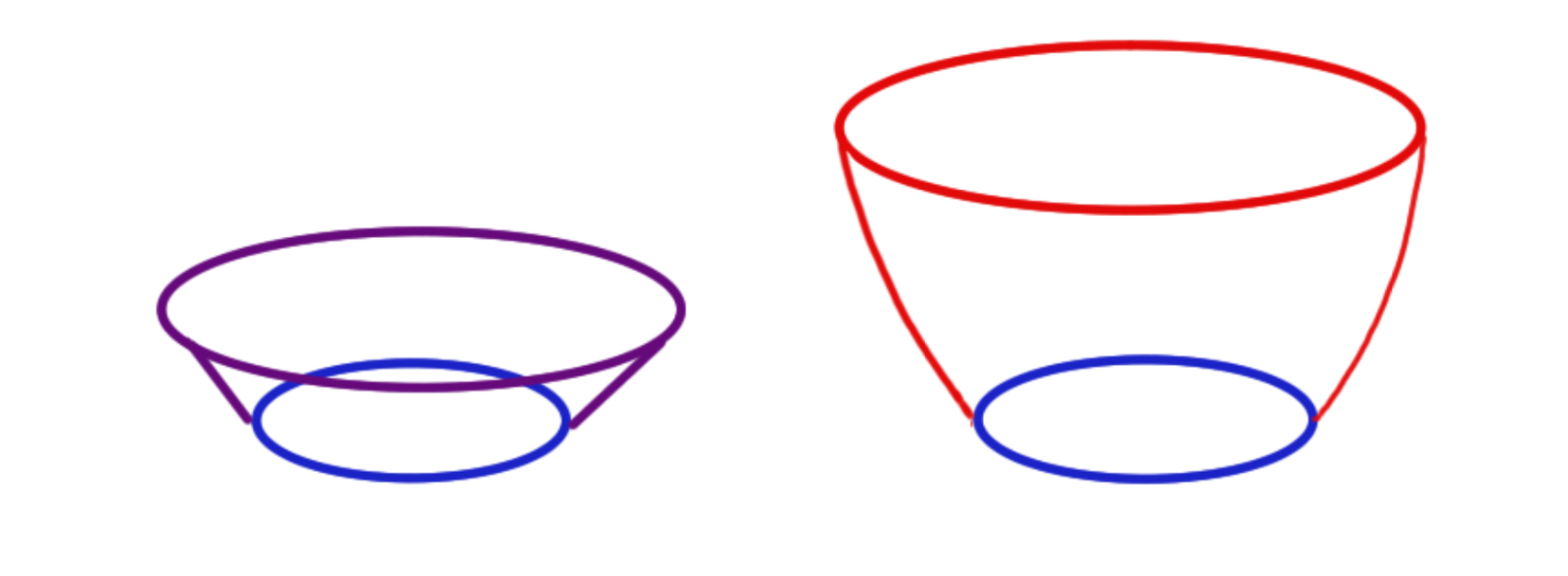}
	\end{center}
	\caption{On the left we see $\textcolor{blue}{(\Sigma, h)}$ and $\textcolor{violet}{K}$, and on the right we see  $\textcolor{red}{(M,g)}$.} \label{fig-init-data}
\end{figure}

It has been of great interest to understand the stability of these maximal developments under variations of the initial data. 
If one assumes
\be \label{init-data-conv}
(\Sigma_j, h_j, K_j) \to  (\Sigma_\infty, h_\infty, K_\infty)
\ee
in some sense then can one prove
\be \label{spacetime-conv}
(M_j, g_j) \to (M_\infty, g_\infty)
\ee
in some sense?  Ideally one would like to consider fairly weak convergence
of initial data $(\Sigma_j, h_j, K_j)$ to $(\Sigma_0,h_0,K_0)$
which could arise naturally in cosmological
scenarios: where the $\Sigma_j$ might have increasingly many increasingly
thin gravity wells and black holes.     See Figure~\ref{fig-sequence}.

\begin{figure}
	\begin{center}
		\includegraphics[width=0.8\textwidth]{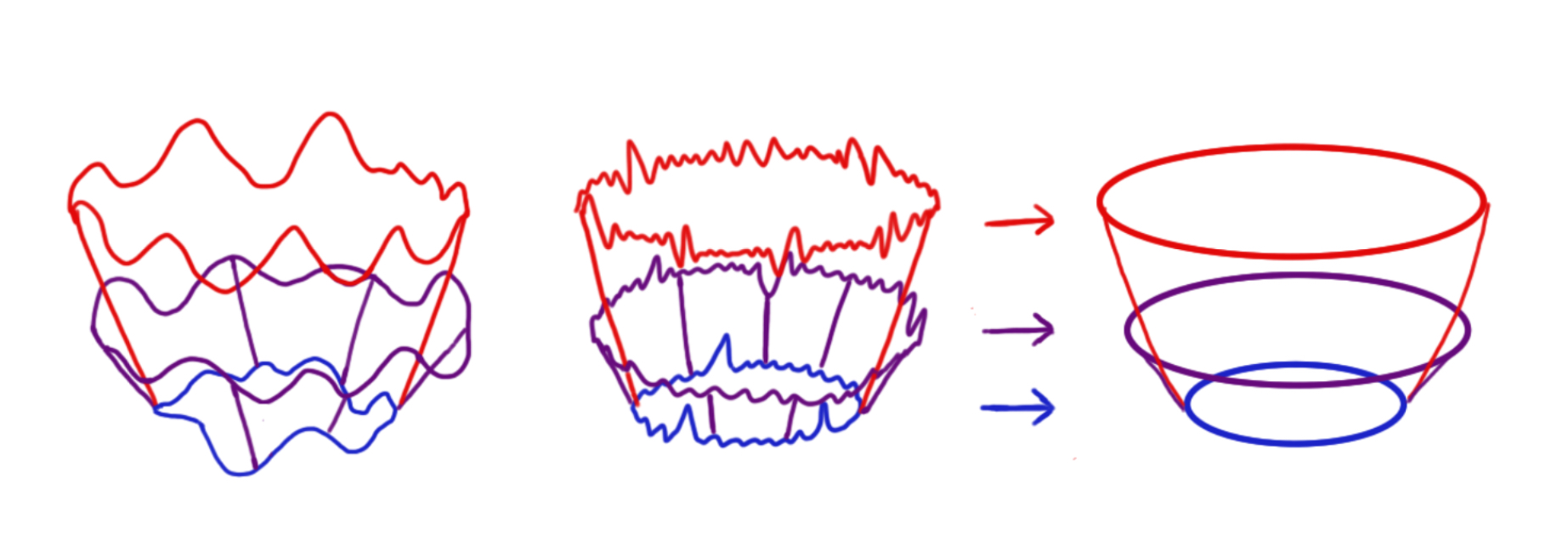}
	\end{center}
	\caption{Here $(\textcolor{blue}{\Sigma_j, h_j}, \textcolor{violet}{K_j}) 
		\to  (\textcolor{blue}{\Sigma_\infty, h_\infty}, \textcolor{violet}{K_\infty})$
		and $\textcolor{red}{(M_j, g_j)} \to \textcolor{red}{(M_\infty, g_\infty)}$ 
		where we have only weak not smooth convergence of the initial data and Cauchy developments.} \label{fig-sequence}
\end{figure}

Significant work has been conducted assuming that $(\Sigma_\infty, h_\infty, K_\infty)$ is
Euclidean space with $K_{\infty}=0$ and that $\Sigma_j$ are diffeomorphic to $\Sigma_\infty$ and that the 
$h_j$ converge at least in the $C^0$ sense to $h_\infty$ with various bounds on the $K_j$.    See the work of
Christodoulou, Klainerman, Rodniansky,
Bieri and Szeftel \cite{ChKl}\cite{KlRod}\cite{Bieri}\cite{KlRodSz}.   There has also been
recent work on the stability of Schwarzschild Initial data by Dafermos, Holzegel, Rodnianski and Taylor \cite{DHR}\cite{DHRT}. 
There is work of
Moncrief, Hao, Chrusciel, Berger, Isenberg, and Ringström on the stability of FLRW spacetimes
surveyed in Chapter XVI of \cite{Choquet-Bruhat-OxMono}.   However, if one is going to use FLRW models to predict the behavior of the cosmos, one cannot assume that the initial data sets are smoothly close to the homogeneous and isotropic initial data used in these models.   In particular, it has been observed that the universe contains black holes, and thus the spacelike universe is not even diffeomorphic to one of these models.  Dyer and Roeder have constructed swiss cheese models of the universe \cite{Dyer-Roeder} which are used numerically but very little theoretical mathematics research has been done to study how close such models are to the FLRW spacetimes.    
See work of the second author in
\cite{Sormani-GAFA} for a discussion of the difficulties even for the spacelike slices.

The volume preserving intrinsic flat ($\mathcal{VF}$) distance has been successfully applied to produce a small distance between Riemannian manifolds that have thin wells and tiny holes and ones where the wells and holes have disappeared.   
First defined by the second author with Wenger in \cite{SW-JDG} applying work of Ambrosio-Kirchheim \cite{AK}, the intrinsic flat $\mathcal{F}$ distance is closely related to the flat distance used in Geometric Measure Theory but it is intrinsic in the sense that it does not need an external space containing the two manifolds.   The second author, Lakzian, Allen, and Perales have proven in a series of papers that one can control the $\mathcal{VF}$ distance between spaces using volume and area estimates  \cite{LS13} \cite{VADB} \cite{AP-VADB}.  These results have been applied to spacelike stability questions arising in General Relativity by both authors, Huang, Lee, LeFloch, Stavrov, Perales, Cabrera-Pacheco, and others (cf final section of the survey \cite{Sormani-scalar-21}).

This paper has been written to develop the essential volume estimates needed to help achieve the spacetime intrinsic flat  ($\mathcal{SIF}$) notion of convergence.   The goal would be to prove one day that if $(\Sigma_j, h_j) \to (\Sigma_\infty, h_\infty)$ in the $\mathcal{VF}$ sense  and
if $K_j \to K_\infty$ converges in some natural weak sense, then compact regions in the global maximal developments 
will converge in the $\mathcal{SIF}$ sense.   Since $(\Sigma_j, h_j) \to (\Sigma_\infty, h_\infty)$ in the $\mathcal{VF}$ sense implies $\area_{h_j}(\Sigma_j) \to \area_{h_\infty}(\Sigma_\infty)$ and work by the first author with Bryden and Khuri in \cite{BKS21} indicates that an $L^p$ bound on the $K_j$ is natural, in this paper we assume related conditions and work to draw consequences about the volumes and areas in $(M_j, g_j)$.

It should be noted that the notion of $\mathcal{SIF}$ convergence first arose in discussions between the second author, Shing-Tung Yau, and Lars Andersson: where it was proposed that one use the Andersson-Galloway-Howard cosmological time function of \cite{cosmo-time} combined with $\mathcal{F}$ convergence \cite{SW-JDG}.   See \cite{Sormani-Oberwolfach-18} for the plan and work of Vega, Sakovich, Allen, and Burtscher in \cite{Sormani-Vega-1},  \cite{AllenBurtscher1}, \cite{Vega21}, \cite{Sakovich-Sormani-SIF} for progress.   However, one does not need to read these papers or know anything about intrinsic flat convergence to read this paper, as here we are focussing purely on obtaining the volume and area estimates needed for this program but are not yet applying them.

In this paper we estimate the volume bounds of spacetimes $M$ satisfying the strong energy condition in terms of
controls on their initial data sets, $(\Sigma,h,K)$.   We do not assume these $M$ satisfy the Einstein equation with any particular matter model.   We use the cosmological
time functions of Anderson-Galloway-Howard \cite{cosmo-time} (cf. Definition~\ref{def:cosmotime}) defined by
\be
\tau(q):=\sup_{p\in M,\, p\leq q}\tau(p,q).
\ee
In particular we prove:

\begin{thm} \label{main-thm} Let   $(\Sigma ,h , K)\subset (M,g)$ be a compact initial data set for a future Cauchy development $(M,g)$. 
If $(M,g)$ satisfies the strong energy condition, then
	\begin{align} 
|\mathcal{S}_t  | &\leq 
2^{n-1}  \left(t^n \, ||K||_{L^n}^n + {|\Sigma |} \right)\\
|\Omega_t  | &\leq 
2^{n-1}  \left(\frac{t^{n+1}}{(n+1)} \, ||K||_{L^n}^n + t {|\Sigma |} \right)
\end{align}
where $n=\dim(\Sigma)$, $\mS_t:=\tau^{-1}(t)\setminus \mathrm{Cut}(\Sigma)$ and $\Omega_t:=\tau^{-1}([0,t])$.
\end{thm}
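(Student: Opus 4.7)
The plan is to parametrize the future Cauchy development by normal exponential coordinates based at $\Sigma$ and obtain a pointwise upper bound on the area-Jacobian via a Raychaudhuri comparison. Concretely, let $\nu$ be the future-directed timelike unit normal to $\Sigma$ and set $\Phi(p,s) := \exp_p(s\nu_p)$. Within the cut-free region, $\Phi$ is a diffeomorphism, the cosmological time $\tau$ agrees with the geodesic parameter $s$, and the metric volume form factors as $\mathcal{A}(p,s)\, d\sigma_h(p)\, ds$ with $\mathcal{A}(p,0)=1$. By the standard existence of past-maximizing orthogonal geodesics to a Cauchy hypersurface in a globally hyperbolic spacetime, $\Omega_t$ is covered, up to a null set, by $\Phi\bigl(\{(p,s) : 0 \le s \le t,\ s < s_{\mathrm{cut}}(p)\}\bigr)$, while $\mathcal{S}_t$ is the exact $\Phi$-image of $\{(p,t) : t < s_{\mathrm{cut}}(p)\}$.

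The next step is to control $\mathcal{A}$ through the expansion $\theta(p,s) := \partial_s \log \mathcal{A}(p,s)$ of the orthogonal congruence, whose initial value is $\theta_0(p) = \operatorname{tr}_h K(p)$. Since the congruence is hypersurface-orthogonal it is vorticity-free, so the Raychaudhuri equation reads
\[
\dot\theta \;=\; -\operatorname{Ric}(\gamma',\gamma') \;-\; |\sigma|^2 \;-\; \frac{\theta^2}{n}.
\]
Under the strong energy condition $\operatorname{Ric}(\gamma',\gamma') \ge 0$, and combined with $|\sigma|^2 \ge 0$, this yields the scalar Riccati-type inequality $\dot\theta \le -\theta^2/n$. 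Comparing with the ODE $y' = -y^2/n$, $y(0) = \theta_0$, and integrating $\partial_s \log \mathcal{A} = \theta$ from $0$ to $s$ gives the pointwise bound
\[
\mathcal{A}(p,s) \;\le\; \bigl(1 + s\,\theta_0(p)/n\bigr)_+^{\,n},
\]
valid throughout the pre-cut region; once the right-hand side reaches zero a focal point has formed and one is outside of $\Sigma\setminus\operatorname{Cut}(\Sigma)$.

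It remains to pass from this pointwise estimate to the stated global bounds. Using the pointwise bound $|\theta_0(p)|/n \le |K(p)|$ (where $|K|$ is the tensor norm appearing in $\|K\|_{L^n}$), together with the elementary inequality $(a+b)^n \le 2^{n-1}(a^n + b^n)$ for $a,b \ge 0$ applied to $a=1$ and $b=s|K(p)|$, one obtains
\[
\mathcal{A}(p,s) \;\le\; (1 + s|K(p)|)^n \;\le\; 2^{n-1}\bigl(1 + s^n |K(p)|^n\bigr).
\]
Integrating over $\Sigma$ at $s=t$ yields the area estimate for $|\mathcal{S}_t|$. The volume estimate follows by Fubini in the $\Phi$-coordinates, which gives $|\Omega_t| \le \int_0^t |\mathcal{S}_s|\, ds$, and integrating the area bound from $0$ to $t$.

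The main obstacle I anticipate is the careful bookkeeping around the Lorentzian cut locus and the identification of $\tau$ with $s$: one must argue that every point of $\Omega_t$ arises from the normal exponential map on a set of full measure, and that the cosmological time along a normal geodesic coincides with its length parameter until the first cut. Both facts rely on standard but delicate properties of maximizing geodesics in globally hyperbolic spacetimes and on the measure-zero nature of the cut set; the Raychaudhuri calculation and the final elementary inequalities are by contrast routine.
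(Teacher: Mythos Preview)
Your proof is correct and rests on the same core analytic ingredient as the paper's: a Raychaudhuri/Riccati comparison under the strong energy condition giving $\theta(s)\le \theta_0/(1+s\theta_0/n)$ and hence the pointwise Jacobian bound $\mathcal{A}(p,s)\le(1+s\theta_0(p)/n)_+^n$, followed by the same elementary inequalities $|H|\le n|K|$ and $(a+b)^n\le 2^{n-1}(a^n+b^n)$. The packaging differs slightly. The paper does not integrate the pointwise Jacobian directly; instead it partitions $\Sigma$ into finitely many disjoint measurable pieces $U_i$ of diameter at most $\epsilon$, sets $\beta_i:=\sup_{U_i}H_+$, invokes the Treude--Grant constant-mean-curvature estimate $|\mathcal{S}_t(U_i)|\le |U_i|(1+\beta_i t/n)^n$ on each piece (itself proved via the Riccati equation and the variation-of-area formula), sums, and lets $\epsilon\to 0$ to obtain the integral bound $|\mathcal{S}_t|\le\int_\Sigma(1+H_+t/n)^n\,d\mu$ before applying Jensen. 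Your direct pointwise-Jacobian argument short-circuits this partition-and-limit step and is cleaner; the paper's route has the advantage of quoting the earlier Treude--Grant area monotonicity as a black box. The cut-locus measure-zero and $\tau=s$ issues you flag as potential obstacles are handled in the paper's background section exactly as you anticipate, so your proof goes through.
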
 

In Theorem~\ref{thm:areaest} and Corollary~\ref{cor:volest}, we prove similar results for regions evolving from relatively compact subsets of $\Sigma $ (see Lemma~\ref{lem:subsets}). We further show that these estimates continue to hold if $\Sigma$ (resp.~$A\subseteq \Sigma$) is non-compact (resp.~not relatively compact) but still has finite area, see Corollary~\ref{cor:noncompact} and Remark \ref{rem:noncompactA} within.     We exclude the cut locus from the $\tau$-level sets when estimating the areas to avoid issues with defining the area of a non-smooth set in a Lorentzian manifold.   In Section~\ref{sec:limitareas} we define a generalized area for the full level sets imitating methods from Riemannian geometry. %, see \eqref{eq:defgenArea}.   
We obtain analogous bounds for these generalized areas and volumes in Lemma~\ref{Vol-Sigma-T-1} and Corollary~\ref{Vol-Sigma-T-2}.

A crucial aspect of our theorem is that we only require $L^n$ bounds on the second fundamental form/mean curvature.
In Section~\ref{Sect-Example} we present an example indicating that weaker assumptions on the second fundamental form cannot be applied to control the volumes of regions.    Prior theorems controlling these volumes in the Lorentzian setting have been proven by Grant-Treude and Treude \cite{TG}\cite{TreudeDiploma}, and later extended to spacetime metrics which are only $C^{1,1}$ in  \cite{Graf-C1.1-vol-comp}, but those papers required uniform pointwise bounds on the second fundamental form/mean curvature.  See Remark~\ref{Comp-to-GT} within.   Naturally these types of volume estimates and their consequences were first pioneered in the Riemannian setting, starting with Bishop and Gromov \cite{Bishop},\cite{Gromov}.   Perales and Paeng \cite{Perales},\cite{Paeng} found volume bounds using integral mean curvature bounds in the Riemannian setting, and indeed we were directly inspired by Perales' work.

An immediate corollary of our theorem is that if we consider a sequence of  compact initial data sets 
$(\Sigma_j ,h_j , K_j)$ for future Cauchy developments $(M_j,g_j)$ satisfying the strong energy condition, then weak convergence of the initial data implies boundedness of the volumes.  More precisely:
 If 
 \be 
 (\Sigma_j, h_j) \VFto (\Sigma_{\infty},h_{\infty})
 \ee
  and 
  \be 
  ||K_j||_{L^n(\Sigma_j)}\to ||K_{\infty}||_{L^n(\Sigma_{\infty})},
  \ee
   then the level sets of $\tau$ have bounded (generalized) areas as described in %Theorem~\ref{main-thm} and 
   Corollary~\ref{Vol-Sigma-T-2}. Thus it should be easy to show $\tau^{-1}(t)$ have subsequences 
which converge in the
intrinsic flat sense even if they are not smooth Riemannian manifolds using the theory in \cite{SW-JDG}.  We conjecture that 
\be
\tau_j^{-1}[0, T] \subset M_j \,\,\, \SIFto \,\,\,\tau_{\infty}^{-1}[0, T] \subset M_{\infty}
\ee
when $M_\infty$ is an FLRW spacetime and $M_j$ satisfy the Einstein Equation with an appropriate matter model.  This would then answer the question 
as to why one can use FLRW spacetimes to model the evolution of our universe.   

\vspace{.3cm}

\noindent
{\bf Acknowledgements}: We would like to thank Lars Andersson and Shing-Tung Yau for first suggesting that a spacetime intrinsic flat convergence defined using the cosmological time might be applied to solve the open questions described in the introduction.  We'd like to thank James Grant for introducing us to one another and discussing key ideas with us at the beginning of this project.

\section{Background}

{\bf Standard notations and conventions.} We will use the following standard definitions and notations from Lorentzian geometry: A piecewise smooth curve $\gamma :I\to M$ is called future directed timelike/causal/null if $\dot{\gamma}(t)$ is future pointing timelike/causal/null for all $t$. 
We write $p\ll q$ if there exists a future directed (f.d.) timelike  curve from $p$ to $q$ (and $p\leq q$ if there exists a future directed causal curve from $p$ to $q$ or $p=q$) and set 
\begin{align}
I^{+}(p): & =\left\{ q\in M:\, p\ll q\right\} \\
J^{+}(p): & =\{q\in M:\, p\leq q\}.
\end{align}
%$I^{-}$ and $J^{-}$ are defined analogously. 
 For $p\in M$ we define the (future) time separation or {\em Lorentzian distance} of a point $q\in M$ to $p$ by
	\begin{equation}
	\tau_p(q)\equiv\tau(p,q):=\sup\left\{ L(\gamma):\gamma\,\text{is a f.d. causal curve form }p\text{ to }q\right\} \cup\{0\}.\label{eq:point time sep}
	\end{equation}
Similarly one defines the future time separation to a set $A\subseteq M$
	by
	\begin{equation}
	\tau_{A}(q):=\sup_{p\in A}\tau(p,q).\label{eq:subset time sep}
	\end{equation}
For a spacelike hypersurface $\Sigma \subseteq M$ with future pointing unit normal vector field $\mathbf{n}$ we define the second fundamental form by 
\be II(V,W):=-g(\mbox{nor}(\nabla_VW), \mathbf{n})\ee for vector fields $V,W$  tangential to $\Sigma$ 
and the scalar mean curvature $H$ by \be H:=\mathrm{tr}_\Sigma K =\tr_\Sigma \mathbf{S},\ee
where $\mathbf{S}$ is the the shape operator for $\Sigma$ defined as $\mathbf{S}: X\mapsto \nabla_X \mathbf{n}$.

\subsection{Our setting}

We consider smooth initial data sets $(\Sigma ,h , K)$, where $(\Sigma ,h)$ is an $n$-dimensional Riemannian manifold 
and $K$ is a symmetric $(0,2)$-tensor field on $\Sigma$, with a {\em future globally hyperbolic development} or {\em future Cauchy development} $(M,g )$. That is, we assume that 
\begin{itemize}
	\item[(i)] $(M,g)$ is a time-oriented $(n+1)$-dimensional Lorentzian manifold $(M,g)$ with spacelike boundary $\partial M=\Sigma $,
	\item[(ii)] $h$ is the metric induced by $g $ on $\Sigma $ and $K$ is the second fundamental form 
	\item[(iii)] and $(M, g)$ is globally hyperbolic with Cauchy hypersurface $\Sigma$ such that additionally $M=J^+(\Sigma)$.
\end{itemize} 

Note that we do not require any specific matter model. We will be interested in the future evolution of $(\Sig,h,K)$ within a future Cauchy development given certain mean curvature and Ricci curvature bounds.

\br[Our curvature assumptions] \normalfont 
For the Ricci curvature we will require that 
\be \Ric(\dot{\gamma},\dot{\gamma})\geq 0 \label{eq:Ricbound} \ee
along any future directed timelike geodesic $\gamma$ starting orthogonally to $\Sigma$.  
Note that this bound is implied by the {\em strong energy condition}, a common energy condition in general relativity. In its physical formulation the strong energy condition imposes
\be
T(X,X)-\frac{1}{2} \mathrm{tr}(T) g(X,X)\geq 0 \quad\quad \forall\;\; \mathrm{timelike}\;\; X\in TM,
\ee
where $T$ denotes the stress-energy tensor. Via the Einstein equations (without cosmological constant) this condition on $T$ is equivalent to the geometric curvature condition
\be\Ric(X,X)\geq 0 \quad\quad \forall\;\; \mathrm{timelike}\;\; X\in TM\ee
which often is itself called the strong energy condition.
Clearly, this implies \eqref{eq:Ricbound}.

We postpone the discussion about the mean curvature bounds to Remark \ref{rem:meancurvrem}.
\er

We remark that, while we in general  a priori assume both the existence of the future Cauchy development $(M,g)$ and the strong energy condition (instead of, e.g., restricting ourselves to certain matter models), in vacuum and many other matter models it is well established that any initial data satisfying the constraint equations has a unique maximal globally hyperbolic development:

\br[Vacuum initial data]
In \cite{Choquet-Bruhat-52} Choquet-Bruhat proved that any initial data set $(\Sig,h , K)$ with $h \in M_s$ and $k\in H_{s-1}$, $s>\tfrac{n}{2}+1$, which satisfy the vacuum Einstein constraints,
\begin{align}
\mathrm{R}(h)-|K|_h^2+(\tr K)^2&=0\\
\mathrm{div}_h(K-(\tr K) h)&=0,
\end{align}
admits a globally hyperbolic vacuum Einstein development $(M,g)$ as above.
In joint work with Geroch \cite{CBG:69}, she proved this development is unique if it is taken to be maximal.
\er

\subsection{Comparison spaces}\label{sec:compspaces}

Similarly to the estimates in \cite{TG}, at various points in our analysis it will be interesting to compare our bounds to the respective quantities in a model spacetime which we will define in this section (following \cite[Sec.~4.2]{TG}). While these model spaces are well known, we want to present a quick derivation here. 
We start by making an ansatz as a warped product spacetime $M=I\times (N^n_c, h_c)$, where $(N^n_c, h_c)$ denotes the $n$-dimensional, simply-connected (Riemannian) manifold of constant curvature $c \in \{-1,0,1\}$, with metric
\bel{FRWmetric}
g = - dt^2 + a(t)^2 h_c,
\ee
where $a:I \to (0,\infty) $ is a smooth function.
We now wish to determine model spacetimes $M_\beta$ satisfying that $\Ric=0$ and $\Sigma_0:=\{0\}\times (N^n_c, h_c)$ is a Cauchy hypersurface with constant mean curvature $\beta\geq 0$.

For $X,Y$ tangential to $N^n_c$ we have (cf.~\cite[Cor.~12.10, Cor.~10.43]{ON})
\begin{align*}
\Ric(\partial_t, \partial_t) &= - n \frac{a''}{a},\\
\Ric(\partial_t,X) &=0,\\
\Ric(X,Y)&= \big( (n-1)(\frac{a'}{a})^2+\frac{(n-1)c}{a^2}+\frac{a''}{a}\big) g(X,Y).
\end{align*}
For the mean curvature of a constant-$t$-slice $\Sigma_t=\{t\}\times N_c^n$ we get (using \cite[Cor.~12.8(3)]{ON})
\be
H = -\mathrm{tr}_{\Sigma_t}\, g(K(.,.),\mathbf{n})= \tr_{\Sigma_t} (\frac{a'}{a} g) =\frac{na'}{a}=\frac{n}{a}.
\ee
So, for $c=-1$ (i.e., hyperbolic time slices) 
and \be a_\beta(t):=t+\frac{n}{\beta},
\ee
$(M,g)$ has $\Ric =0$ and $H_0=\beta $ as desired.

Summing up we have arrived at the following definition.

\begin{defn}\label{def:modelgeom}
We define the model geometry $(M_\beta, g_\beta)$ for an initial mean curvature $\beta \geq 0$ as the spacetime $M_\beta = [0,\infty) \times \mathbb{H}^n$ with Lorentzian metric 
\be g_\beta = -dt^2+(t+\frac{n}{\beta})^2 h_{-1},
\ee
where $h_{-1}$ denotes the standard Riemannian metric on $\mathbb{H}^n$, and initial Cauchy hypersurface
\be
\Sigma_{\beta, 0} := \{ 0 \} \times \mathbb{H}^n.
\ee
\end{defn}

Note that the areas and volumes of the time evolution of any  $A\subseteq \Sigma_{\beta,0}$ (with finite area) in the model spaces may be directly calculated as
\begin{align}\label{eq:comparisonarea}
|\Sigma_{\beta, t}(A)| &=|\{t\}\times A|= \frac{a(t)^n}{a(0)^n} |A| = |A| \left( 1+ \frac{\beta t}{n} \right)^n,
\\
|\Omega_{\beta, t }(A)| &= \int_0^t |(\Sigma_{\beta, s})(A)| \, ds =
|A| \int_0^t \left( 1+ \frac{\beta s}{n} \right)^n \, ds,
\end{align}

where we use $|.|$ to denote either the $(n+1)$-dimensional spacetime volume or the $n$-dimensional induced area depending on if we look at a spacetime region or a subset of a time slice/spacelike hypersurface.

\subsection{The cosmological time function and its properties}

Going back to non-model spacetimes, in order to study the time evolution of $\Sigma $ we need to equip $M$ with a ''global time'' for which $\Sigma $ equals the ''time $=0$'' level set and the ''time $=T$'' levels may be interpreted as the future of $\Sigma $ at time $T$. One possibility is to use the future time separation to the initial Cauchy hypersurface, $\tau_\Sigma $, corresponding to the Lorentzian distance to the initial Cauchy hypersurface $\Sigma=\partial M$. Another option for defining a global time is the cosmological time function introduced in \cite{cosmo-time} (note that  the authors only consider spacetimes without boundary but their definition straightforwardly extends to our setting):

\begin{defn}[Cosmological time]\label{def:cosmotime}
The cosmological time $\tau :M\to [0,\infty]$ is defined as
\begin{align}
\tau(q):=\sup_{p\leq q}\tau(p,q) 
\end{align} 
\end{defn}

We now show that the cosmological time agrees with the Lorentzian distance to $\Sigma $. In the proof we use that, because $\Sigma $ is a Cauchy hypersurface, $\tau_{\Sigma}$ is finite valued and continuous and that for any $p\in I^+(\Sigma)=J^+(\Sigma)\setminus \Sigma$ there exists $q\in\Sigma$ and a future directed timelike curve $\gamma: [a,b]\to M$ from $q$
to $p$ such that
\be\label{maxgeod}
L(\gamma|_{[a,t]})=\tau_{\Sigma}(\gamma(t))=\tau(q,\gamma(t)) \quad\quad \forall t\in [a,b].
\ee
Any such maximizing
curve $\gamma$ has to be a (reparametrization of) a timelike geodesic starting orthogonally to $\Sigma$ and is unique up to parametrization. These are standard results from Lorentzian geometry (see, e.g., \cite[Sec.~14]{ON}, \cite[Thm 3.2.19, Cor 3.2.20]{TreudeDiploma}).

\begin{lem}\label{lem:cosmotime=TGtime}Let $(M, g)$  be a gobally hyperbolic spacetime  with Cauchy hypersurface $\Sigma=\partial M$ and $M=J^+(\Sigma)$. Then $\tau=\tau_\Sigma$.
\end{lem}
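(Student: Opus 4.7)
The plan is to establish the two inequalities $\tau_\Sigma\le\tau$ and $\tau\le\tau_\Sigma$ separately, using only the definition of the cosmological time, the hypothesis $M=J^+(\Sigma)$, and the reverse (wrong-way) triangle inequality for the Lorentzian time separation.

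First I would observe the easy direction $\tau_\Sigma(q)\le \tau(q)$. Since $\Sigma=\partial M\subseteq M$, every $p\in\Sigma$ with $p\leq q$ is a point of $M$ with $p\leq q$, and points $p\in\Sigma$ with $p\not\leq q$ contribute $\tau(p,q)=0$ to the supremum defining $\tau_\Sigma(q)$. Hence the sup in $\tau_\Sigma(q)$ is taken over a subset of the sup defining $\tau(q)$, giving the inequality immediately. (For $q\in\Sigma$ both quantities are zero, using that $\Sigma$ is a spacelike achronal Cauchy hypersurface and $M=J^+(\Sigma)$.)

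For the reverse inequality $\tau(q)\le\tau_\Sigma(q)$, the plan is to push any admissible past point of $q$ back to a point of $\Sigma$ at no cost. Given $p\in M$ with $p\leq q$, the hypothesis $M=J^+(\Sigma)$ provides some $p'\in\Sigma$ with $p'\leq p$. The reverse triangle inequality for the Lorentzian time separation (standard, see e.g.\ \cite[Ch.~14]{ON}) then yields
\be
\tau(p',q)\ \ge\ \tau(p',p)+\tau(p,q)\ \ge\ \tau(p,q),
\ee
so $\tau_\Sigma(q)\ge \tau(p',q)\ge \tau(p,q)$. Taking the supremum over all $p\leq q$ in $M$ gives $\tau_\Sigma(q)\ge \tau(q)$.

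There is no serious obstacle in this argument: the only ingredient beyond the definitions is the reverse triangle inequality, and the role of the two standing hypotheses is clear ($\Sigma\subseteq M$ for the first direction, $M=J^+(\Sigma)$ for the second). What makes the statement useful, rather than the proof itself, is the conceptual identification of the global cosmological time with the geometrically more tractable distance-to-initial-hypersurface function $\tau_\Sigma$, which is then continuous and, off the cut locus, realised by the unit-speed normal geodesics flowing out of $\Sigma$ as recorded in \eqref{maxgeod}. This identification is what will let subsequent sections analyse level sets $\tau^{-1}(t)$ by means of the normal exponential map of $\Sigma$.
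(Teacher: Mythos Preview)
Your proof is correct and follows essentially the same approach as the paper's: both directions are handled identically, and the key step of pushing an arbitrary $p\leq q$ back to a point of $\Sigma$ via $M=J^+(\Sigma)$ and then invoking the reverse triangle inequality is the same. The only cosmetic difference is that the paper separates the case $p\in\Sigma$ from $p\in I^+(\Sigma)$ to obtain a strict inequality $\tau(p_\Sigma,q)>\tau(p,q)$, whereas you handle both cases uniformly with the non-strict inequality; either version suffices.
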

\begin{proof} Let $q\in M$. Clearly,
\be
\tau_\Sigma(q) =\sup_{p\in\Sigma } \tau(p,q)= \sup_{p\in\Sigma, p\leq q } \tau(p,q)\leq  \sup_{p\leq q } \tau(p,q)= \tau(q).
\ee
 On the other hand, for any $p\leq q$ with $p\in M\setminus \Sigma = J^+(\Sigma)\setminus \Sigma = I^+(\Sigma)$ there exists $p_\Sigma \in \Sigma $ with $p_\Sigma \ll p\leq q$ and hence 
 \be
\tau(p_\Sigma  ,q) >\tau(p,q).
 \ee This shows $
\tau_\Sigma(q) = \sup_{p\in\Sigma, p\leq q } \tau(p,q)\geq  \sup_{p\leq q } \tau(p,q)= \tau(q).$ \qed
\end{proof}

Using $\tau $ we define level sets 
\be
\Sigma_t:=\tau^{-1}(t)
\ee
of $\tau $ as well as the $(n+1)$-dimensional spacetime evolution until time $t$ starting from $\Sigma $ 
\be
\Omega_{t}:=\tau^{-1}([0,t]).
\ee
One of our goals is to adapt estimates from Treude-Grant, \cite{TG}, to estimate $n$-dimensional ''limit-areas'' of $\Sigma_t$ (see section \ref{sec:limitareas}; note that $\Sigma_t$ isn't necessarily a smooth spacelike hypersurface) and $(n+1)$-dimensional spacetime volumes of $\Omega_t$ 
while assuming only a bound on the timelike Ricci curvature and on the $L^n$-norm of the mean curvature $H$ of $\Sigma =\tau^{-1}(0)$. 

To do this we need some additional background about properties of $\tau\equiv \tau_\Sigma$, in particular concerning its level sets and smoothness away from a spacetime measure zero set -- the Lorentzian cut locus of $\Sigma $. To reduce the amount of additional background needed we won't present the following results in their most general form.
We will also omit all proofs, referring instead mainly to \cite{TG,TreudeDiploma}.\footnote{Note that, while these references look at Lorentzian manifolds without boundary and thus don't technically treat our case where the Cauchy hypersurface is the boundary of $M$, all arguments work the same in our case.} Additionally a comprehensive study of the Lorentzian cut locus for the Lorentzian distance to a point may be found in \cite[Sec.~9]{BEE} and many arguments therein are completely analogous to our case where one considers the distance to a Cauchy hypersurface instead.

 Given a spacelike Cauchy hypersurface $\Sigma $ in a spacetime $M$, we denote the future unit normal bundle of $\Sigma $ by $N^+\Sigma $. i.e.,	
\be	N^+\Sigma:=\left\{ v\in T\Sigma^\perp:  v\, \text{future pointing}, \, g(v,v)=-1\right\} 
\ee
Analogous to the Riemannian case we now define the cut function and the cut locus:

\begin{defn}
	[Cut function]Let $\left(M,g\right)$ 
	be globally hyperbolic and $\Sigma\subset M$ be a spacelike Cauchy hypersurface. The function 
	\begin{align}
	s_{\Sigma}:\, &N^+\Sigma  \to [0,\infty ]\\
	s_{\Sigma}(v) & :=\sup\left\{ t>0:\,\tau_{\Sigma}(\gamma_{v}(t))=L(\gamma_{v}|_{\left[0,t\right]})\right\},
	\end{align}
	where $\gamma_v$ denotes the unique geodesic with initial value $\dot{\gamma}(0)=v$, is called the (future) cut function. 
\end{defn}

\begin{defn}
	[Cut locus]The tangential (future) cut locus is defined as
	\be
	\mathrm{Cut}_{T}(\Sigma):=\left\{ s_{\Sigma}(v)v:\, v\in N^+\Sigma\:\:\mathrm{and}\,s_{\Sigma}(v)v\in\mathcal{D}\right\} \subset T\Sigma^\perp,
	\ee
	where $\mathcal{D}$ denotes the domain of definition of the exponential map. The (future) cut locus is defined as the image of the tangential
	cut locus under the exponential map:
	\be
	\mathrm{Cut}(\Sigma):=\exp(\mathrm{Cut}_{T}(\Sigma)).
	\ee
\end{defn}

Note that from these definitions it immediately follows that all points on a maximizing geodesic as in \eqref{maxgeod} {\em before} the endpoint will {\em not} belong to the cut locus.

Similar to \cite{TG} our goal is to use (a Lorentzian version of) Laplacian comparison to estimate areas of level sets of $\tau \equiv \tau_\Sigma $. Unfortunately, as in the Riemannian case, the Lorentzian distance function is not smooth on all of $M$ and the best one can hope for is smoothness away from $\mathrm{Cut}(\Sigma)$. In the case of an $n$-dimensional Riemannian manifold the cut locus is closed and has $n$-dimensional measure zero because the cut function is Lipschitz continuous. 
In the Lorentzian setting one does not know if the cut function is continuous,  
but we do at least have that $v\mapsto s_\Sigma(v)$ is lower semi-continuous and  and continuous at each point $v\in N^+\Sigma$ where either $s_\Sigma(v)=\infty $ or $v s_\Sigma(v) \in \mathcal{D}$ (\cite[Prop.~3.2.29]{TreudeDiploma}, see also \cite[Prop.~9.5, 9.7]{BEE}).

From this and the local Lipschitz continuity of the normal exponential map we immediately get that
\begin{enumerate}
	\item the tangential cut locus is closed and has $(n+1)$-dimensional measure zero in $(T\Sigma)^\perp $ and
	\item the cut locus $\mathrm{Cut}_\Sigma$ is closed and has $(n+1)$-dimensional measure zero.
\end{enumerate}
One can further show 
\begin{enumerate}
	\item[(3)] $\mM:=M\setminus \mathrm{Cut}(\Sigma)$ is open in $M$ and diffeomorphic to \[\{tv: v\in N^+\Sigma \,\mathrm{and}\, t\in [0,s_\Sigma (v)\}\subseteq (T\Sigma)^\perp\] via $\exp $ (\cite[Thm.~3]{TG},\cite[Thm.~3.2.31]{TreudeDiploma}),
	\item[(4)] the cosmological time function $\tau  $ is smooth on $\mM$ with past directed unit timelike gradient (\cite[Prop.~1]{TG}),
	\item[(5)] the set $\mS_t:=\Sigma_t\setminus \mathrm{Cut}(\Sigma)=\mM \cap \tau^{-1}(t)$ is a smooth spacelike hypersurface in $M$ (\cite[Sec.~2.5]{TG}).
\end{enumerate}

As a spacelike hypersurface $\mS_t$ is an $n$-dimensional Riemannian manifold with metric $h_t:=g|_{T\mathcal{S}_t\times T\mathcal{S}_t}$ induced by $g$. It therefore has the  well defined area (where we use "area" to mean the induced $n$-dimensional volume)
\be
|\mS_t|\equiv \area(\mS_t) =\int_{\mS_t} d\mu(h_t),
\ee
where $d\mu(h_t)\equiv d\mu_t$ denotes the Riemannian volume measure corresponding to $h_t$.

Instead of looking at the time evolution of $\Sigma $ as a whole we will sometimes want to only look at the evolution of a subset $U\subseteq \Sigma $. To define $\mS_t(U)\subseteq \mS_t$, note that any point $p$ in $\mM$ lies on a {\em unique} geodesic maximizing the distance to $\Sigma$ and thus has a unique base point $p_\Sigma \in \Sigma$ for which $\tau(p)=\tau(p_\Sigma, p)$.
  We define
\be
\mS_t(U):=\{p\in \mS_t: p_\Sigma \in U \}=\exp(\mathcal{D}\cap (T\Sigma)^\perp|_U)\cap \mS_t.
\ee
From the second description we see that $\mS_t(U)\subseteq \mS_t$ is open in $\mS_t$ if $U$ is open in $\Sigma$. This can be used to argue that if $U\subseteq \Sigma $  is Borel measurable, then so is $\mS_t(U)$ and 
\be
|\mS_t(U)|\equiv \area(\mS_t(U))=\int_{\mS_t(U)} d\mu_t=\sup_{\mathrm{compact}\,K\subseteq \mS_t(U)} |K|.
\ee

Regarding the $(n+1)$-dimensional spacetime volumes note that, because the cut locus has $(n+1)$-dimensional measure zero, we have
 \be
 |\Omega_t|\equiv \vol(\Omega_t)=\vol(\Omega_t\cap \mM)=\int_0^t |\mS_\tau| d\tau.
 \ee
So for the spacetime volumes of $\Omega_t$ it doesn't make a difference whether we work within $\mM$ or $M$ and by the co-area formula this volume can be computed by integrating the areas of $\mS_\tau$.

\section{Area and Volume Estimates}\label{GTVol}

In this section we will 
we will derive estimates for the areas of the smooth $n$-dimensional Riemannian submanifolds $(\mathcal{S}_t, g|_{T\mathcal{S}_t\times T\mathcal{S}_t})$ assuming only a bound on the area $|\Sigma |$ of our initial data and a bound on the $L^n$-norm of the mean curvature of $\Sigma $ (or even more precisely, a bound on the average of $(H_+)^n$, where $H_+$ denotes the positive part of $H$). This then leads to estimates on the spacetime volume of $\Omega_t$, see Corollary \ref{cor:volest}.

\subsection{Basic area and volume estimates using integral mean curvature bounds}

The area and volume comparison results of~\cite{TG} may be generalized as follows.

\begin{thm}
\label{thm:areaest}
Let $(M, g)$ be an $(n+1)$-dimensional Lorentzian manifold with boundary such that $\Sigma =\partial M$ is a compact spacelike Cauchy hypersurface with $M=J^+(\Sigma)$ such that, along all normal geodesics $\gamma$ to the hypersurface $\Sigma$ we have the Ricci curvature bound 
\begin{align}\label{Ricbound}
\Ric(\dot{\gamma}, \dot{\gamma}) \ge 0.
\end{align}
Let $\mathcal{S}_t := \tau^{-1}({t})\setminus \mathrm{Cut}(\Sigma) =\Sigma_t\setminus  \mathrm{Cut}(\Sigma) $. Then 
\begin{align} \label{AreaEst}
|\mathcal{S}_t  |\leq \int_{\Sigma} \Big(\frac{H_+ t}{n}+1\Big)^n d\mu, 
\end{align}
where $H_+$ denotes the positive part of the mean curvature of $\Sigma$, i.e.,  $H_+(x)=\max\{H(x),0\}$ for $x\in \Sigma$.
\end{thm}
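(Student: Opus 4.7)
The plan is to parametrize $\mS_t$ via the normal exponential map and reduce the bound to a pointwise Jacobian comparison along each normal geodesic. By item (3) of the background, the map $\Phi(p,t) := \exp_p(t\mathbf{n}(p))$ is a diffeomorphism from $\{(p,t)\in\Sigma\times[0,\infty) : 0\leq t < s_\Sigma(\mathbf{n}(p))\}$ onto $\mM$, with $\tau\circ\Phi(p,t)=t$ because $t\mapsto\Phi(p,t)$ is the unit-speed $\tau_\Sigma$-maximizing geodesic issuing from $p$. Writing $\phi_t := \Phi(\cdot,t)$ and $\Sigma_t^\circ := \{p\in\Sigma : s_\Sigma(\mathbf{n}(p)) > t\}$, we have $\mS_t = \phi_t(\Sigma_t^\circ)$, and the Riemannian area formula applied to the diffeomorphism $\phi_t : \Sigma_t^\circ \to \mS_t$ gives
\[
|\mS_t| \;=\; \int_{\Sigma_t^\circ} \theta(p,t)\, d\mu(p) \;\leq\; \int_\Sigma \theta(p,t)\, d\mu(p),
\]
where $\theta(p,t)>0$ is the Jacobian of $\phi_t$, normalized so that $\theta(p,0)=1$.

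Next I would identify, along the normal geodesic $\gamma_p(t) := \Phi(p,t)$, the quantity $\partial_t\log\theta(p,t)$ with the scalar mean curvature $H_t$ at $\gamma_p(t)$ of the smooth equidistant hypersurface $\mS_t$, computed with respect to the future unit normal $-\grad\tau = \gamma_p'(t)$; in particular $H_0 = H(p)$. The Weingarten map $\mathbf{S}_t$ of the equidistant foliation satisfies the standard Lorentzian Riccati equation $\mathbf{S}_t' + \mathbf{S}_t^2 + R(\cdot,\gamma_p')\gamma_p' = 0$ along $\gamma_p$. Tracing and combining with the Cauchy--Schwarz bound $\tr(\mathbf{S}_t^2)\geq H_t^2/n$ and the hypothesis $\Ric(\gamma_p',\gamma_p')\geq 0$ yields the scalar Raychaudhuri inequality
\[
H_t' \;\leq\; -\tfrac{1}{n}H_t^2 \qquad \text{for } 0\leq t < s_\Sigma(\mathbf{n}(p)).
\]

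An elementary ODE comparison with the explicit solution $\bar H_t := nH_+(p)/(n+H_+(p)t)$ of the corresponding equality, interpreted as $\bar H_t\equiv 0$ when $H(p)\leq 0$ (which is consistent because in that case the differential inequality forces $H_t\leq 0$ on the whole domain), gives $H_t\leq\bar H_t$ throughout. Integrating $\partial_t\log\theta(p,t)\leq\bar H_t$ from $0$ to $t$ produces
\[
\theta(p,t) \;\leq\; \Big(1 + \tfrac{H_+(p)\, t}{n}\Big)^{\!n},
\]
and inserting this bound into the displayed estimate for $|\mS_t|$ finishes the argument.

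The step that will require the most care is pinning down sign conventions: verifying that the future unit normal to the equidistant slice at $\gamma_p(t)$ is $\gamma_p'(t) = -\grad\tau|_{\gamma_p(t)}$, that the induced Weingarten map at $t=0$ reproduces the paper's $\mathbf{S}$ on $\Sigma$ (so $H_0 = H(p)$), and that the Lorentzian Riccati equation carries the sign written above. Once those are nailed down, the remaining content is a slice-by-slice Raychaudhuri comparison, directly parallel to the Riemannian Bishop--Gromov argument; notice that compactness of $\Sigma$ is never used for the pointwise Jacobian bound, which is what should make the later generalizations to relatively compact subsets and finite-area non-compact $\Sigma$ essentially automatic.
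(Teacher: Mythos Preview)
Your argument is correct and takes a genuinely different route from the paper. The paper does not work with the pointwise Jacobian $\theta(p,t)$ at all; instead it covers the compact $\Sigma$ by finitely many disjoint measurable sets $U_i$ of diameter at most $\epsilon$, sets $\beta_i := \sup_{U_i} H_+$, and applies (essentially re-deriving) the Treude--Grant constant-mean-curvature estimate on each piece via the variation-of-area formula for compact subsets, obtaining $|\mS_t(U_i)| \leq |U_i|(\tfrac{\beta_i t}{n}+1)^n$. Summing gives $|\mS_t| \leq \int_\Sigma f_\epsilon\, d\mu$ for the step function $f_\epsilon = \sum_i \chi_{U_i}(\tfrac{\beta_i t}{n}+1)^n$, and one finishes by letting $\epsilon \to 0$, using that $f_\epsilon \to (\tfrac{H_+ t}{n}+1)^n$ uniformly via a global Lipschitz bound on $x\mapsto(\tfrac{H_+(x)t}{n}+1)^n$.

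Both proofs pass through the same Raychaudhuri inequality along normal geodesics; the difference is purely in how the area is assembled from it. Your pointwise Jacobian route is more direct and, as you note at the end, never uses compactness of $\Sigma$: this makes the later extensions to relatively compact subsets (Lemma~\ref{lem:subsets}) and to non-compact finite-area $\Sigma$ (Corollary~\ref{cor:noncompact}) immediate rather than requiring the exhaustion argument the paper gives. Conversely, the paper's partition argument yields as a byproduct the monotonicity-of-ratios statement \eqref{eq:nonincreasingU_i} on each piece, which connects more transparently with the comparison-geometry viewpoint of \cite{TG}. One small cosmetic point: in your first display, $\theta(p,t)$ is only defined for $p\in\Sigma_t^\circ$, so the inequality $\int_{\Sigma_t^\circ}\theta\,d\mu \leq \int_\Sigma\theta\,d\mu$ should really be written after inserting the pointwise bound $\theta(p,t)\leq (1+\tfrac{H_+(p)t}{n})^n$, whose right-hand side is globally defined.
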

	
\begin{cor}\label{cor:volest}
	Under the same assumptions as the previous theorem we have
	\begin{align}\label{VolEst}
	|\Omega_t |=\int_0^t |\mathcal{S}_s|\, ds  \leq \int_{\Sigma} \int_0^t \Big(\frac{H_+ s}{n}+1\Big)^{n} ds\; d\mu.
	\end{align}
\end{cor}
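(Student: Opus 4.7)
The plan is to proceed as a direct consequence of \thmref{thm:areaest}, since the real content (the pointwise in $s$ area estimate for $\mathcal{S}_s$) is already established. The first equality in \eqref{VolEst} is not something that needs to be re-proved here: the background section already records that $\mathrm{Cut}(\Sigma)$ has $(n+1)$-dimensional measure zero and that $\tau$ is smooth with past-directed unit timelike gradient on $\mM = M\setminus \mathrm{Cut}(\Sigma)$, so a Lorentzian co-area formula applied to $\tau$ on $\Omega_t\cap\mM$ yields
\[
|\Omega_t| = \int_{\Omega_t\cap \mM} d\mathrm{vol}_g = \int_0^t |\mathcal{S}_s|\, ds,
\]
where the factor $|\nabla \tau|_g = 1$ (in absolute value) makes the co-area Jacobian trivial. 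So the first equality in \eqref{VolEst} is simply recalled from the background.

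For the inequality, I would apply \thmref{thm:areaest} to the slice $\mathcal{S}_s$ at each time $s\in[0,t]$ to get
\[
|\mathcal{S}_s| \leq \int_\Sigma \Big(\frac{H_+ s}{n}+1\Big)^n d\mu
\]
and then integrate this bound over $s\in [0,t]$. The integrand on the right is non-negative and jointly measurable in $(x,s)\in \Sigma \times [0,t]$ (it is continuous in $s$ and the measurability in $x$ comes from the measurability of $H$), so Tonelli's theorem permits swapping the order of integration, producing
\[
\int_0^t |\mathcal{S}_s|\, ds \leq \int_0^t\int_\Sigma \Big(\frac{H_+ s}{n}+1\Big)^n d\mu\, ds = \int_\Sigma \int_0^t \Big(\frac{H_+ s}{n}+1\Big)^n ds\, d\mu,
\]
which is exactly \eqref{VolEst}.

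There is essentially no obstacle here; the proof is a one-line application of \thmref{thm:areaest} together with the already established co-area identity and Tonelli. If one wanted a cleaner bookkeeping, one could alternatively perform the change of variables to the normal exponential chart on $\mM$ once and for all, express $|\Omega_t|$ as an integral over $N^+\Sigma \times [0, s_\Sigma(v)\wedge t]$ of the Lorentzian Jacobian, and apply the pointwise Jacobian bound that presumably underlies \thmref{thm:areaest}; but this seems unnecessary since the theorem has already been proved.
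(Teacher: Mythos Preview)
Your proposal is correct and follows essentially the same approach as the paper: the paper's proof is a one-liner invoking the coarea formula (together with the fact that the cut locus has measure zero), and your argument is simply a more detailed unpacking of that, with the added observation that Tonelli justifies swapping the $s$- and $\Sigma$-integrals.
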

\begin{proof} For $|\Omega_t \setminus \mathrm{Cut}(\Sigma)|$ this is just the coarea formula. The result for $|\Omega_t|$ follows because the cut locus has measure zero.
\end{proof}

	\br[Mean curvature bounds] \label{rem:meancurvrem} \normalfont Before proving Theorem~\ref{thm:areaest} we want to discuss several sufficient conditions on the mean curvature or second fundamental form to guarantee boundedness of $\int_{\Sigma} \Big(\frac{H_+ t}{n}+1\Big)^n d\mu$ and hence of $|\mS_{t}|$ and $|\Omega_t|$.

	{\em (i)}  Using Jensen's inequality, \eqref{AreaEst} becomes
	\begin{eqnarray}\label{eq:Jensenuse}
	\left|\mathcal{S}_t\right| &\leq& \int_{\Sigma} \left(\frac{H_+ t}{n}+1\right)^n d\mu
	\leq 2^{n-1} \int_{\Sigma} \left(\left(\frac{H_+ t}{n}\right)^n+1\right)d\mu\\
	&=& 2^{n-1} \left(\left(\frac{t}{n}\right)^n \int_{\Sigma} H_+^nd\mu+|\Sigma|\right). 
	\end{eqnarray} 
	So, as $\Sigma$ has itself finite area, bounding the $L^n$-norm of $H_+ $ is sufficient to get boundedness of the areas of $\mathcal{S}_t$. Further, since $H_+\leq |H|$ bounds on the $L^n$-norm of $H$ are sufficient as well and using this we may reformulate the conclusion of Theorem \ref{thm:areaest} and Corollary \ref{cor:volest} as
	\begin{align} \label{eq:constest}
	|\mathcal{S}_t  |\leq 
	2^{n-1}  \left(\left(\tfrac{t}{n}\right)^n \,||H||_{L^n}^n + {|\Sigma |} \right),
	\end{align}
	\begin{align} \label{eq:constest3}
	|\Omega_t |\leq 
	2^{n-1}  \left(\left(\tfrac{t^{n+1}}{n^n(n+1)}\right) \,||H||_{L^n}^n + t\,{|\Sigma |} \right).
	\end{align}
	
	Further, we could replace $||H||_{L^n}$ by $||H||_{L^p}$ for any $p>n$ in the estimates above as \be\int_{\Sigma} f^n d\mu \leq \big(\int_{\Sigma} f^p d\mu\big)^{\frac{n}{p}}\ee for all $p>n$ (again by Jensen's inequality). For $p<n$, however, this is will no longer work and Example \ref{ex:ex1} strongly indicates that (barring any additional assumptions) an $L^p$-bound on the mean curvature for any $p<n$ is insufficient for area and volume bounds of the form \eqref{eq:constest}, \eqref{eq:constest3}. \vspace{0.5em}

	{\em (ii)} Note further that an $L^n$ bound on the second fundamental form $K$ 
	implies an $L^n$ bound on $H=\mathrm{tr} (K)$: 
	
	Since $K$ is a $(0,2)$-tensor, $|K|^2=g^{im}g^{jl}K_{ij}K_{ml}$ so, for an orthonormal basis $e_1,\dots ,e_n \in T\Sigma $, we have $|K|^2=\sum_{i=0}^{n}\sum_{j=0}^{n} (K(e_i,e_j))^2$ and $|H|^2=(\sum_{j=1}^n K(e_j,e_j))^2$. Thus we estimate
	\be|H|^2\leq n^2 \max_{0\leq j\leq n} (K(e_j,e_j))^2 \leq n^2 |K|^2,\ee
	so $|H|^n\leq n^n \, |K|^n$ and so $||H||_{L^n}^n\leq n^n \, ||K||_{L^n}^n $.
	To summarize,
	\begin{cor}\label{cor:IDest} Under the same assumptions as in Theorem \ref{thm:areaest} we have
		\begin{align} \label{eq:constest1}
		|\mathcal{S}_t  |\leq 
		2^{n-1}  \left(t^n \, ||K||_{L^n}^n + {|\Sigma |} \right),
		\end{align}	\begin{align} \label{eq:constest4}
		|\Omega_t  |\leq 
		2^{n-1}  \left(\frac{t^{n+1}}{(n+1)} \, ||K||_{L^n}^n + t {|\Sigma |} \right).
		\end{align}
	\end{cor}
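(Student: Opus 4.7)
The plan is to obtain both estimates as immediate consequences of the bounds already derived inside Remark~\ref{rem:meancurvrem}, by feeding the pointwise linear-algebra inequality $|H|\le n|K|$ from part (ii) of that remark into the Jensen-type estimates of part (i). In other words, nothing new needs to be proved; I just need to chain together the inequalities that appear in the remark and verify that the constants collapse neatly.

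First, I would invoke inequality~\eqref{eq:constest}, which states
\begin{align*}
|\mathcal{S}_t|\leq 2^{n-1}\left(\left(\tfrac{t}{n}\right)^{n}\|H\|_{L^n}^{n}+|\Sigma|\right),
\end{align*}
together with its volume counterpart~\eqref{eq:constest3}, obtained from Corollary~\ref{cor:volest}:
\begin{align*}
|\Omega_t|\leq 2^{n-1}\left(\tfrac{t^{n+1}}{n^{n}(n+1)}\|H\|_{L^n}^{n}+t|\Sigma|\right).
\end{align*}
Both were established from Theorem~\ref{thm:areaest} using Jensen's inequality and the coarea formula, so I may take them as given here.

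Second, I would check the pointwise estimate $|H|\le n|K|$ on $\Sigma$. At any $x\in\Sigma$, choosing an $h$-orthonormal basis $e_1,\dots,e_n$ of $T_x\Sigma$, one has $H(x)=\sum_{j=1}^{n}K(e_j,e_j)$, so by Cauchy--Schwarz
\begin{align*}
|H(x)|^{2}\leq n\sum_{j=1}^{n}K(e_j,e_j)^{2}\leq n\sum_{i,j=1}^{n}K(e_i,e_j)^{2}=n\,|K(x)|_{h}^{2},
\end{align*}
which is actually a slightly sharper form of the bound sketched in Remark~\ref{rem:meancurvrem}(ii); either way, raising to the $n/2$-power and integrating over $\Sigma$ yields $\|H\|_{L^n}^{n}\leq n^{n}\|K\|_{L^n}^{n}$.

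Finally, substituting this last inequality into the two displays above, the factor $n^{n}$ cancels $(1/n)^{n}$ in the area bound and $1/n^{n}$ in the volume bound, producing exactly
\begin{align*}
|\mathcal{S}_t|&\leq 2^{n-1}\left(t^{n}\,\|K\|_{L^n}^{n}+|\Sigma|\right),\\
|\Omega_t|&\leq 2^{n-1}\left(\tfrac{t^{n+1}}{n+1}\,\|K\|_{L^n}^{n}+t|\Sigma|\right),
\end{align*}
as claimed. There is no genuine obstacle: the corollary is a bookkeeping step collecting the two ingredients from Remark~\ref{rem:meancurvrem} into a single clean statement expressed in terms of the second fundamental form $K$ rather than its trace $H$.
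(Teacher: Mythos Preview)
Your proposal is correct and follows essentially the same route as the paper: the corollary is indeed just the combination of the Jensen-type bounds \eqref{eq:constest}, \eqref{eq:constest3} from part (i) of Remark~\ref{rem:meancurvrem} with the inequality $\|H\|_{L^n}^n\le n^n\|K\|_{L^n}^n$ from part (ii), after which the factors of $n^n$ cancel. The only minor difference is that you obtain the pointwise trace estimate via Cauchy--Schwarz (yielding the sharper $|H|^2\le n|K|^2$) whereas the paper uses the cruder bound $|H|^2\le n^2|K|^2$; either suffices since only $\|H\|_{L^n}^n\le n^n\|K\|_{L^n}^n$ is needed.
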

	\er

\br[Comparison of \eqref{AreaEst} to the estimates from \cite{TG}]\label{Comp-to-GT}

\normalfont Corollary~\ref{cor:IDest} (resp.~\eqref{eq:constest}, \eqref{eq:constest3}) shows that any sequence of Cauchy developments $(M_i, g_i)\supseteq (\Sigma_i , h_i, K_i)$ with uniform bounds on $|\Sigma_i |_{h_i}$ and $||K_i||_{L^n(\Sigma_i)}$ (resp.~$||H_i||_{L^n(\Sigma_i)}$) has uniformly bounded areas $|(\mathcal{S}_t)_i |_{g_i}$ and volumes  $|(\Omega_t)_i|_{g_i}$. 
 
 This is the main feature distinguishing Theorem \ref{thm:areaest} from earlier estimates available in  \cite{TG}: There, under the same assumptions as in Thm.~\ref{thm:areaest}, except for demanding the stronger {\em pointwise} mean curvature bound $H\leq \beta $ for some $\beta \in \mathbb{R}$,  Treude and Grant have shown
 \be\label{eq:TGAest}
|\mathcal{S}_t  |\leq |\Sigma |\; (\frac{\beta t}{n}+1)^n,
\ee
 \be\label{eq:TGVest}
|\Omega_t  |\leq |\Sigma |\,\int_0^t (\frac{\beta s}{n}+1)^n ds ,
\ee
for $0\leq t\leq \frac{n}{|\beta |}$ if $\beta <0$ and for all $t\geq 0$ if $\beta \geq 0$.

This only implies uniform boundedness of these areas if all $H_i$ are uniformly pointwise bounded which is a lot stronger than assuming a uniform bound on the  $L^n(\Sigma_i)$-norms of the $H_i$ --- while by compactness of $\Sigma$ each $H_i$ individually must be pointwise bounded by some $\beta_i$ this will generally not be uniform. Further, in Corollary \ref{cor:noncompact}, we extend \eqref{AreaEst} to non-compact $\Sigma $ with finite area, where even an individual mean curvature might not be pointwise bounded despite having finite $L^n$-norm.
\er

\br[Sharpness of estimates and rigidity]\normalfont  Note that the bounds in \eqref{eq:constest} and \eqref{eq:constest3} are not sharp due to the use of Jensen's inequality in the last step of the proof. In particular the model geometries defined in section \ref{sec:compspaces} will not satisfy \eqref{eq:constest} or \eqref{eq:constest3}  with equality. 

However, the original estimates \eqref{AreaEst} and \eqref{VolEst} of Theorem \ref{thm:areaest} and Corollary \ref{cor:volest} and their counterparts \eqref{subsetAreaEst} and \eqref{subsetVolEst} for $|\mS_t(A)|$ and $|\Omega_t(A)|$ of Lemma \ref{lem:subsets} are sharp in the following sense: Let $A \subseteq \mathbb{H}^n$ be compact. Then for any $\beta>0$ we consider the model geometry $M=[0,\infty)\times \mathbb{H}^n$ with metric $g_{\beta}=-dt^2+(t+\frac{n}{\beta})^2h_{-1}$
(cf.~section \ref{sec:compspaces}). Since $H=H_+=\beta >0$ is constant on $\mathbb{H}^n$, \eqref{subsetAreaEst}
becomes
\be |\mS_t(A)| 
\le \int_{A } \left(\frac{\beta t}{n}+1\right)^n d\mu,
\ee
which is equal to $|\mS_t(A)|=|A| \left(\frac{\beta t}{n}+1\right)^n$ (cf.~\eqref{eq:comparisonarea}).

We can also derive sharp inequalities in terms of the $L^p$-norms of $H_+$ for $p=1,\dots,  n$ from \eqref{AreaEst} by doing a binomial expansion. 
In particular, for $n=3$, we obtain
\begin{eqnarray}\label{sharpest1}
|\mathcal{S}_t| 
&\le & \left(\tfrac{t}{3}\right)^3 ||H_+||^3_{L^3}+ 3 \left(\tfrac{t}{3}\right)^2 ||H_+||^2_{L^2} 
+ 3 \left(\tfrac{t}{3}\right)||H_+||_{L^1} + |\Sigma|,
\end{eqnarray}
\begin{eqnarray}\label{sharpest2}
|\Omega_t| 
&\le & \tfrac{1}{4}\left(\tfrac{t}{3}\right)^4 ||H_+||^3_{L^3}+ \left(\tfrac{t}{3}\right)^3 ||H_+||^2_{L^2} 
+ \tfrac{3}{2}\left(\tfrac{t}{3}\right)^2||H_+||_{L^1} +  t\, |\Sigma| .
\end{eqnarray}
Regarding rigidity of those estimates note that we do not expect any strong rigidity: For example assuming $||H_+||_{L^3}\leq C_3, ||H_+||_{L^2} \leq C_2$ and $||H_+||_{L^1}\leq C_1$ for constants $C_i$ and that \[ 	|\mathcal{S}_t| 
= \left(\tfrac{t}{3}\right)^3 C_3^3+ 3 \left(\tfrac{t}{3}\right)^2 C_2^2
+ 3 \left(\tfrac{t}{3}\right)C_1 + |\Sigma| \] for all $t$ (i.e., that  \eqref{sharpest1} holds with equality for all $t$) would only allow us to conclude that $||H_+||_{L^3}= C_3, ||H_+||_{L^2} = C_2, ||H_+||_{L^1}= C_1$, which is not sufficient information to determine $H_+$ (one can easily imagine two different non-negative functions having the same $L^p$-norms for all $p$, e.g., by taking a non-constant function and just translating it).

On the other hand, we believe the following weak rigidity result requiring a {\em pointwise bound on $H$ by a continuous function} to hold, i.e., under the assumptions of Theorem \ref{thm:areaest}, if 
$0\leq H\leq f$ for a continuous $f:\Sigma \to \mathbb{R}$ and \be \label{eq:rgidityassumption}	|\mathcal{S}_t| 
= \int_{\Sigma} (\frac{f t}{n}+1)^n d\mu, \ee
then one should be able to conclude  $H=f$ on $\Sigma$ and $\Ric(\dot{\gamma},\dot{\gamma})=0$ for all future directed timelike geodesics $\gamma$ starting orthogonally to $\Sigma$.

This is in line with intermediate results in rigidity statements for the earlier estimates by Treude and Grant shown in \cite{Graf-splitting}. Note that we do not expect to recover any statement about isometry to a model space unless the upper bound function for $H$ is constant.  
\er

\subsection{Proof of Theorem \ref{thm:areaest}}

\begin{proof}[Proof of Theorem \ref{thm:areaest}]  For any $\epsilon >0$ cover $\Sigma$ by a finite number of disjoint measurable  $U_i\subseteq \Sigma$ with $\mathrm{diam}(U_i)\leq \epsilon$.\footnote{This can be achieved by, e.g., covering $\Sigma$ with balls of diameter $\epsilon $, extracting a finite subcover $B_i, i=1,\dots, N$ and defining $U_i:=B_i\setminus \bigcup^{i-1}_{j=0} B_j$.} 

Since the $U_i$ are disjoint, the sets $\mathcal{S}_t(U_i)$ are as well. Further, each  $\mS_t(U_i)$ is measurable 
 and they cover $ \mS_t$ since the $U_i$ cover $\Sigma$, so
\be|\mS_t|=\sum_{i=0}^N |\mS_t(U_i)|. \ee
We now use \cite[Thm.~8]{TG} to estimate $|\mS_t(U_i)|$, sketching their estimates here for completeness\footnote{Note also that while \cite{TG} assume that $H\leq \beta$ on all of $\Sigma$, as we will see their proof really only needs the bound at points $p_\Sigma\in\Sigma$ that are base points of maximizing geodesics from $U_i$ to $\mS_t(U_i)$.}:
Let 
\be
\beta_i:=\sup_{x\in U_i}H_+(x) =\max_{x\in \overline{U_i}}H_+(x) <\infty.
\ee

Then $H|_{U_i}\leq \beta_i$. Since $\tau =\tau_\Sigma$ is a timelike distance function on $\mM$ (\cite[Sec.~2.5]{TG}), the shape operator $\mathbf{S}: X\mapsto \nabla_X \mathbf{n}$, where $\mathbf{n}$ is the future unit normal vector field to the $\mS_t$'s, satisfies the Riccati equation (cf.~\cite[eq.~(5)]{TG})
\be \label{eq:Riccati}
\nabla_{\mathbf{n}}\mathbf{S}+\mathbf{S}^2+\mathrm{Riem}(.,\mathbf{n})\mathbf{n}=0.
\ee
So, since the unique unit-speed maximizing geodesic $\gamma: [0,t]\to \mM$ from
$p_\Sigma \in U_i$ to $p\in \mS_t(U_i)$ is an integral curve of $\mathbf{n}$, looking at \eqref{eq:Riccati} along $\gamma $ and taking the trace gives 
\be
\frac{d}{d\tau}H_{\mS_\tau}(\gamma(\tau))+\tr_{\mS_\tau} (\mathbf{S}^2)(\gamma(\tau))+\mathrm{Ric}(\dot{\gamma}(\tau),\dot{\gamma}(\tau))=0,
\ee
where $H_{\mS_\tau}(\gamma(\tau))=\tr_{\mS_\tau} \mathbf{S}(\gamma(\tau))$ is the mean curvature of $\mS_\tau$ at $\gamma(\tau)$. Using non-negativity of $\mathrm{Ric}(\dot{\gamma}(\tau),\dot{\gamma}(\tau))$ and $(\tr(\mathbf{S}))^2\leq (n-1)\tr (\mathbf{S}^2)$ gives us
\be
\frac{d}{d\tau}H_{\mS_\tau}(\gamma(\tau))+\frac{1}{n-1}H_{\mS_\tau}(\gamma(\tau))\leq 0
\ee
So using $H_\Sigma(p_\Sigma)=H_{\mS_\tau}(\gamma(\tau))|_{\tau=0}\leq \beta_i$ a standard analysis of the Raychaudhuri equation implies
\be H_{\mS_\tau}(\gamma(\tau))\leq \frac{n}{\tau +\frac{n}{\beta_i}} \quad \forall \tau \leq t.
\ee
Note that while we don't necessarily need the comparison spacetimes discussed in section \ref{sec:compspaces} for our analysis the right hand side here actually equals the constant mean curvature of a $\{t=\tau\}$-slice in the comparison spacetime $M_{\beta_i}$.

Now let $0\leq t_1<t_2$ and let $K_{t_2}\subseteq \mS_{t_2}(U_i)$ be a compact subset and for $0\leq \tau \leq t_2$ denote by $K_\tau\subseteq \mS_\tau(U_i)$ its backwards flow along $\mathbf{n}$ at time $\tau$. We can use the variation of area formula  to estimate
\be
\frac{d}{d\tau}\log(|K_\tau|)=\frac{1}{|K_\tau |}\int_{K_\tau} H_{\mS_\tau}(q) d\mu_\tau(q)\leq \frac{n}{\tau +\frac{n}{\beta_i}}=\frac{d}{d\tau}\log((\frac{\beta_i \tau }{n}+1)^n).
\ee
So the quotient of $|K_\tau|$ and $(\frac{\beta_i \tau}{n}+1)^n$ is non-increasing in $\tau $, so
\be \frac{|K_{t_2}|}{(\frac{\beta_i t_2}{n}+1)^n}\leq \frac{|K_{t_1}|}{(\frac{\beta_i t_1}{n}+1)^n}\leq \frac{|\mS_{t_1}(U_i)|}{(\frac{\beta_i t_1}{n}+1)^n}.
\ee
Because this holds for all compact $K_{t_2}\subseteq \mS_{t_2}(U_i)$ it follows
that 
\be \label{eq:nonincreasingU_i}  \frac{|\mS_{t_2}(U_i)|}{(\frac{\beta_i t_2}{n}+1)^n}\leq \frac{|\mS_{t_1}(U_i)|}{(\frac{\beta_i t_1}{n}+1)^n}
\ee
and because $t_1<t_2$ were arbitrary we get that
\be t\mapsto \frac{|\mS_{t}(U_i)|}{(\frac{\beta_i t}{n}+1)^n}
\ee is non-increasing. Again this could be rewritten using the model geometries from section \ref{sec:compspaces} as
\be  t\mapsto \frac{|\mS_{t}(U_i)|}{|\Sigma_{\beta_i,t}(A)|}\quad \mathrm{is\; non-increasing}.
\ee

It follows that
\be \label{eq:areaestU_i}	|\mS_t(U_i)|\leq |U_i| \left(\frac{\beta_i t}{n}+1\right)^n=\int_{U_i} \left(\frac{\beta_i t}{n}+1\right)^n\,d\mu.
\ee

	Now we paste these local area estimates together to obtain an estimate for $|\mS_t|=\sum |\mS_t(U_i)|$: Let $\chi_i $ be the characteristic function of $U_i$ and define $f_\epsilon = \sum_{i=1}^N \chi_i\; (\frac{\beta_i t}{n}+1)^n$ (where we reintroduce $\epsilon \geq \diam(U_i)$ in the notation). Then  
	\be
	\left|\mathcal{S}_t\right|\leq \sum_i \int_{U_i} \left(\frac{\beta_i t}{n}+1\right)^n \,d\mu
	=\int_{\Sigma} f_\epsilon \,d\mu
	\ee
	for all $\epsilon$. By construction 
\be
\left|f_{\epsilon }(x)-\left(\frac{H_+(x) t}{n}+1\right)^n\right| \leq L\,\epsilon,
\ee	
	where $L$ is a global Lipschitz constant for $x\mapsto (\frac{H_+(x) t}{n}+1)^n$. So 
	\be
	f_\epsilon \to \left(\frac{H_+ t}{n}+1\right)^n \textrm{ uniformly}
	\ee
	 and hence  
	 \be
	 \int_{\Sigma} f_\epsilon\,d\mu\to \int_{\Sigma} \left(\frac{H_+ t}{n}+1\right)^nd\mu
	  \textrm{ as }\epsilon \to 0.
	  \ee 
So, 
	\begin{eqnarray}
	\left|\mathcal{S}_t\right| &\leq& \int_{\Sigma} \left(\frac{H_+ t}{n}+1\right)^n d\mu.
	\end{eqnarray}  \qed
\end{proof}

\section{Generalized area estimates for $\Sigma_t$}\label{sec:limitareas}

Let $\Sigma_t=\tau^{-1}(t)$ and recall 
\be
\mathcal{S}_t = \tau^{-1}({t})\setminus \mathrm{Cut}(\Sigma) =\Sigma_t\setminus  \mathrm{Cut}(\Sigma).
\ee
Define a generalized area for $\Sigma_t$ as
\be \label{eq:defgenArea}
a(\Sigma_t):=\limsup_{h \to 0+} \frac{1}{h}  |\tau^{-1}([t-h, t])|=\limsup_{h \to 0+} \frac{1}{h} (|\Omega_{t-h}|-|\Omega_t|).
\ee

\begin{lem}\label{Vol-Sigma-T-1} The generalized areas satisfy
\be
|\mathcal{S}_T| \le a(\Sigma_T) \le \limsup_{t\to T-} |\mathcal{S}_t|.
\ee
\end{lem}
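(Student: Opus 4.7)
The plan is to rewrite $a(\Sigma_T)$ via the coarea identity $|\Omega_t|=\int_0^t|\mathcal{S}_s|\,ds$ from the end of the background section (which is valid because $\mathrm{Cut}(\Sigma)$ has measure zero). Since
$$|\tau^{-1}([T-h,T])|=|\Omega_T|-|\Omega_{T-h}|=\int_{T-h}^T|\mathcal{S}_s|\,ds,$$
the quantity $a(\Sigma_T)$ is the limsup as $h\to 0+$ of one-sided averages of the numerical function $f(s):=|\mathcal{S}_s|$ at $s=T-$. The upper bound $a(\Sigma_T)\le \limsup_{t\to T-}|\mathcal{S}_t|$ then reduces to a routine averaging estimate: if that limsup is $+\infty$ there is nothing to prove, and otherwise, for any $\eta>0$ there is $\delta>0$ with $f(s)\le \limsup_{t\to T-}|\mathcal{S}_t|+\eta$ for $s\in(T-\delta,T)$, so the average of $f$ over $[T-h,T]$ satisfies the same bound as soon as $h<\delta$; letting $\eta\to 0$ gives the claim.

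The main content is the lower bound $|\mathcal{S}_T|\le a(\Sigma_T)$. The same averaging argument reduces it to the left-sided lower semi-continuity
$$|\mathcal{S}_T|\le \liminf_{s\to T-}|\mathcal{S}_s|.$$
For this I would propagate compact subsets of $\mathcal{S}_T$ backward along the flow of $\mathbf{n}=-\nabla\tau$. Fix compact $K\subseteq \mathcal{S}_T$: property (3) of the background supplies, for each $p\in K$, a unique $v_p\in N^+\Sigma$ with $p=\exp(Tv_p)$ and $T<s_\Sigma(v_p)$, and the lifted set $W:=\{v_p:p\in K\}$ is compact in $N^+\Sigma$. Lower semi-continuity of $s_\Sigma$ on the compact set $W$ forces $\min_W s_\Sigma>T$, so $K_s:=\exp(sW)$ is a compact subset of $\mathcal{S}_s$ for every $s$ in some left-neighborhood $(T-\delta,T]$. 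Since $K_s$ is the smooth image of $K$ under the flow of $\mathbf{n}$ on the open set $\mathcal{M}$, the area $|K_s|$ depends continuously (in fact smoothly) on $s$ — e.g.\ by the first variation of area formula, noting that the mean curvature of $\mathcal{S}_s$ is smooth and locally bounded on $\mathcal{M}$, or by writing $|K_s|=\int_W J_s(v)\,dV(v)$ for a Jacobian $J_s$ smooth in $(s,v)$. Thus $|K_s|\to|K|$ as $s\to T-$, which gives $|K|\le \liminf_{s\to T-}|\mathcal{S}_s|$. Taking the supremum over compact $K\subseteq \mathcal{S}_T$ and invoking inner regularity of $d\mu_T$ on the Borel set $\mathcal{S}_T$ finishes the proof.

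The step I expect to be the main obstacle is exactly this left lower-semi-continuity of $s\mapsto|\mathcal{S}_s|$ at $s=T$: the entire argument hinges on pushing $\mathcal{S}_T$ backward along $-\nabla\tau$ without hitting the cut locus. Property (3) of the background supplies the required diffeomorphism, but care is needed because in the Lorentzian setting $s_\Sigma$ is only lower semi-continuous rather than continuous, so one cannot argue pointwise and must appeal to compactness of the lifted set $W$ to extract the uniform margin $\min_W s_\Sigma>T$ that makes the backward flow simultaneously well-defined on all of $K$.
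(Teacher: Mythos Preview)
Your proof is correct and follows essentially the same strategy as the paper's: both rewrite $a(\Sigma_T)$ via the coarea identity, handle the upper bound by the obvious averaging estimate, and handle the lower bound by backward-propagating $\mathcal{S}_T$ along the normal geodesics (the paper phrases this via the base-point set $A\subseteq\Sigma$ with $\mathcal{S}_T=\mathcal{S}_T(A)$ and continuity of $t\mapsto|\mathcal{S}_t(A)|$ on $[0,T]$, which is exactly your $K_s$ argument without the compact-exhaustion layer). One minor remark: the uniform margin $\min_W s_\Sigma>T$ you flag as the main obstacle is not actually needed for \emph{backward} flow to $s\le T$, since $s\le T<s_\Sigma(v_p)$ already holds pointwise for every $p\in K$.
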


\begin{proof}
We first show
\be
|\mathcal{S}_T| \le a(\Sigma_T). \label{eq:Vol-Sigma-T-1-ineq1}
\ee
Let $A\subseteq \Sigma $ be the set of all $p$ such that the unit speed geodesic normal to $\Sigma $ starting at $p$ intersects $\mS_T$. Then $\mS_T=\mS_T(A)$. Since any such geodesic can't encounter a cut point before or at $T$, they all maximize the distance to $\Sigma $ for all $t\leq T$. This implies that $t\mapsto |\mS_t(A)|$ is continuous on $[0,T]$ and hence by the coarea formula
\begin{align}
|\mS_T(A)|&=\lim_{h\to 0+} \frac{1}{h}\int_{T-h}^T |\mS_t(A)| dt\\
&\leq \limsup_{h\to 0+} \frac{1}{h}  \int_{T-h}^T |\mS_t| dt = \limsup_{h\to 0+} |\tau^{-1}([T-h, T])|=a(\Sigma_T)
\end{align}
and we have shown \eqref{eq:Vol-Sigma-T-1-ineq1}. 
To show
\be
a(\Sigma_T)\le \limsup_{t\to T-} |\mathcal{S}_t|,
\ee
assume on the contrary that $\exists \epsilon>0$ such that
\be 
a(\Sigma_T) > \limsup_{t\to T-} |\mathcal{S}_t| + \epsilon.
\ee
Thus there exists $\delta>0$ such that 
\be 
a(\Sigma_T) >|\mathcal{S}_t| + \epsilon/2 \qquad \forall t\in (T-\delta,T).
\ee
Integrating $t$ from $T-h$ to $T$ for $h<\delta$ we have
\begin{eqnarray}
a(\Sigma_T) &>& \frac{1}{h} \int_{T-h}^T |\mathcal{S}_t| \, dt \,\,+ \,\,\epsilon/2  \\
&=& \frac{1}{h} |\tau^{-1}({T-h,T})\setminus \mathrm{Cut}(\Sigma)|\,\,+ \,\,\epsilon/2 \\
&=& \frac{1}{h} |\tau^{-1}({T-h,T})|\,\,+ \,\,\epsilon/2 
\end{eqnarray}
by the coarea formula and the fact that the cut locus has $(n+1)$-dimensional measure zero.
Taking the limsup as $h\to 0+$ we get a contradiction. \qed
\end{proof}

From this and \eqref{eq:constest} we then immediately have:

\begin{cor}\label{Vol-Sigma-T-2}
Under the hypotheses of Theorem~\ref{thm:areaest} 
\be
a(\Sigma_t) \le  \int_{\Sigma} \Big(\frac{H_+ t}{n}+1\Big)^n d\mu \leq 2^{n-1}  \left(\left(\tfrac{t}{n}\right)^n \,||H||_{L^n}^n + {|\Sigma |} \right)
\ee
for all $t\geq 0$.
\end{cor}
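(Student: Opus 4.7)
The plan is to simply chain together Lemma~\ref{Vol-Sigma-T-1}, Theorem~\ref{thm:areaest}, and the Jensen's inequality computation from Remark~\ref{rem:meancurvrem}, so the proof should be very short. First I would invoke the right-hand inequality of Lemma~\ref{Vol-Sigma-T-1} to reduce bounding $a(\Sigma_t)$ to bounding $\limsup_{s \to t-} |\mathcal{S}_s|$. This step isolates the quantity to which our previously established pointwise area estimate applies, even though $\Sigma_t$ itself may intersect $\mathrm{Cut}(\Sigma)$ in a way that would otherwise prevent a direct application.

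Next, for each $s \in [0,t)$, Theorem~\ref{thm:areaest} gives
\begin{equation*}
|\mathcal{S}_s| \;\le\; \int_{\Sigma} \Big(\tfrac{H_+ s}{n}+1\Big)^n d\mu.
\end{equation*}
Since $H_+ \geq 0$, the integrand $(H_+ s/n + 1)^n$ is monotone non-decreasing in $s$ pointwise on $\Sigma$, so $\int_\Sigma (H_+ s/n+1)^n d\mu \le \int_\Sigma (H_+ t/n+1)^n d\mu$ for all $s \le t$. Taking the limsup as $s \to t-$ yields
\begin{equation*}
\limsup_{s\to t-} |\mathcal{S}_s| \;\le\; \int_{\Sigma} \Big(\tfrac{H_+ t}{n}+1\Big)^n d\mu,
\end{equation*}
which combined with Lemma~\ref{Vol-Sigma-T-1} gives the first inequality.

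For the second inequality I would simply cite the Jensen-based computation already carried out in equation~\eqref{eq:Jensenuse} and displayed as \eqref{eq:constest} of Remark~\ref{rem:meancurvrem}, which gives
\begin{equation*}
\int_{\Sigma} \Big(\tfrac{H_+ t}{n}+1\Big)^n d\mu \;\leq\; 2^{n-1}\left(\left(\tfrac{t}{n}\right)^n \|H\|_{L^n}^n + |\Sigma|\right).
\end{equation*}

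There is essentially no obstacle here, since all the real work has been done in Lemma~\ref{Vol-Sigma-T-1} and Theorem~\ref{thm:areaest}; the only subtle point to watch is the passage from the limsup in $s$ into the integrand, but this is immediate from the pointwise monotonicity of $s \mapsto (H_+ s/n+1)^n$ and does not require a dominated convergence argument.
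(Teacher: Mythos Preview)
Your proposal is correct and matches the paper's approach: the paper simply states that the corollary follows ``from this and \eqref{eq:constest}'', i.e., from Lemma~\ref{Vol-Sigma-T-1} combined with the Jensen estimate, which is exactly what you spell out. The only extra detail you make explicit is the passage through $\limsup_{s\to t-}$ via monotonicity of $s\mapsto \int_\Sigma (H_+ s/n+1)^n d\mu$, which is a perfectly valid way to handle it (and indeed, since Theorem~\ref{thm:areaest} already holds at $s=t$ itself, one could even skip the limsup argument entirely).
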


\section{Extending Theorem \ref{thm:areaest} to subsets and non-compact $\Sigma $ with finite area}

Looking at the proof of Theorem \ref{thm:areaest} we see that, even if $\Sigma $ itself were non-compact, the same estimates as in Theorem \ref{thm:areaest} (and the subsequent Remark \ref{rem:meancurvrem}) apply to the time evolution of the area of relatively compact subsets of $\Sigma $.

\begin{lem}\label{lem:subsets}
	Let $(M, g)$ be an $(n+1)$-dimensional Lorentzian manifold with boundary and $\Sigma = \partial M$ a possibly non-compact spacelike Cauchy hypersurface such that $M=J^+(\Sigma)$. Let $A\subseteq \Sigma $ be   relatively compact and measurable such that along all normal geodesics $\gamma$ to the hypersurface $\Sigma$ starting in $A$  we have the Ricci curvature bound 
	\begin{align}
	\Ric(\dot{\gamma}, \dot{\gamma}) \ge 0.
	\end{align}
	Then 
	\begin{align} \label{subsetAreaEst}
	|\mathcal{S}_t(A)  |\leq \int_{A} \Big(\frac{H_+ t}{n}+1\Big)^{n} d\mu
	\end{align}
	and
	\begin{align}\label{subsetVolEst}
	|\Omega_t(A) | = \int_0^t |\mathcal{S}_s(A)|\, ds  \leq \int_{A} \int_0^t \Big(\frac{H_+ s}{n}+1\Big)^{n} ds\; d\mu.
	\end{align} 
\end{lem}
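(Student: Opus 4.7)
The plan is to adapt the proof of Theorem~\ref{thm:areaest} essentially verbatim, using relative compactness of $A$ in place of compactness of $\Sigma$ to extract a finite measurable partition of small diameter.

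First I would fix $\epsilon>0$ and cover the compact set $\overline{A}$ by finitely many open balls $B_1,\dots,B_N$ of diameter at most $\epsilon$ in $(\Sigma,h)$; setting $U_i:=(B_i\cap A)\setminus \bigcup_{j<i}B_j$ yields a finite disjoint measurable partition of $A$ with $\diam(U_i)\leq \epsilon$ and $\overline{U_i}\subseteq \overline{A}$. Because $H$ is continuous on $\Sigma$ and $\overline{A}$ is compact,
\begin{equation}
\beta_i := \sup_{x\in U_i} H_+(x)\leq \max_{\overline{A}} H_+ < \infty.
\end{equation}

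Next I would rerun the Riccati/variation-of-area argument from the proof of Theorem~\ref{thm:areaest} for each $U_i$ individually. That argument is entirely local along maximizing normal geodesics $\gamma$ emanating from base points $p_\Sigma\in U_i\subseteq A$: the Riccati equation \eqref{eq:Riccati}, together with the Ricci bound \eqref{eq:Ricbound} (assumed precisely along such $\gamma$), gives the pointwise bound $H_{\mS_\tau}(\gamma(\tau))\leq n/(\tau+n/\beta_i)$, and the variation-of-area formula then yields the monotonicity property \eqref{eq:nonincreasingU_i}. Hence
\begin{equation}
|\mS_t(U_i)|\leq \int_{U_i}\Big(\tfrac{\beta_i t}{n}+1\Big)^n d\mu.
\end{equation}
Additivity of $|\mS_t(\cdot)|$ over disjoint base sets, immediate from $\mS_t(U)=\exp(\mathcal{D}\cap(T\Sigma)^\perp|_U)\cap \mS_t$, gives $|\mS_t(A)|=\sum_i |\mS_t(U_i)|$. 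Summing the previous display and letting $\epsilon\to 0$ then yields \eqref{subsetAreaEst}: the step functions $f_\epsilon:=\sum_i \chi_{U_i}(\tfrac{\beta_i t}{n}+1)^n$ converge uniformly on $\overline{A}$ to $(\tfrac{H_+ t}{n}+1)^n$ by Lipschitz continuity of $x\mapsto (\tfrac{H_+(x)t}{n}+1)^n$ on the compact set $\overline{A}$, so one may pass to the limit under the integral.

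The volume estimate \eqref{subsetVolEst} then follows from the coarea formula applied on $\mM$ (where $\tau$ is smooth with timelike gradient), combined with $|\mathrm{Cut}(\Sigma)|=0$, and Fubini's theorem to interchange the $s$-integration with the $\mu$-integration. The only real obstacle is bookkeeping, namely verifying that the Riccati and variation-of-area computations from \cite{TG} reproduced in the proof of Theorem~\ref{thm:areaest} depend only on geometry along geodesics starting in $A$ rather than on any global property of $\Sigma$; but this is clear on inspection, since each estimate is carried out along individual maximizing geodesics whose initial points lie in the relevant $U_i\subseteq A$, and the Ricci hypothesis is assumed precisely along those geodesics.
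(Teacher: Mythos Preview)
Your proposal is correct and is precisely the approach the paper takes: the paper does not spell out a separate proof for Lemma~\ref{lem:subsets} at all, but simply remarks that the proof of Theorem~\ref{thm:areaest} goes through for a relatively compact $A\subseteq\Sigma$ since the estimates there are local along normal geodesics starting in the partition pieces $U_i\subseteq A$. Your write-up faithfully supplies those details, including the observation that the Ricci hypothesis is only needed along geodesics with base point in $A$.
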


Lemma \ref{lem:subsets} can also be used to estimate volumes and areas in domains of outer communications:

\begin{cor}\label{cor:DOC} Let $D\subseteq M$ be a past set\footnote{A past set is a set $D\subseteq M$ such that $I^-(D)\subseteq D$.} such that $\Sigma \cap D$ is compact. Under the assumptions of Theorem \ref{thm:areaest} (except for allowing $\Sigma $ being non-compact), we have
	\begin{align} \label{AreaEstD}
	|\mathcal{S}_t \cap D |\leq \int_{\Sigma \cap D} \Big(\frac{H_+ t}{n}+1\Big)^{n} d\mu 
	\end{align}
	and
	\begin{align}\label{VolEstD}
	|\Omega_t \cap D| \leq \int_{\Sigma \cap D} \int_0^t \Big(\frac{H_+ s}{n}+1\Big)^{n} ds\; d\mu.
	\end{align}
	In particular for a black hole spacetime we can then bound  $|\mathcal{S}_t \cap \mathcal{D}|$ and $|\Omega_t \cap \mathcal{D}|$, where $\mathcal{D}$ denotes the domain of outer communications, in terms of bounds on the area and the integral of $H_+^n$ over the part of $\Sigma $ outside all horizons only. 
\end{cor}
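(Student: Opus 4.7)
\textbf{Proof plan for Corollary~\ref{cor:DOC}.} My plan is to reduce the statement to Lemma~\ref{lem:subsets} by taking $A := \Sigma \cap D$. By hypothesis $A$ is compact, hence in particular relatively compact and Borel measurable, and the Ricci bound along normal geodesics emanating from $A$ is inherited from the assumption in Theorem~\ref{thm:areaest}. So Lemma~\ref{lem:subsets} directly supplies
\[
|\mathcal{S}_t(A)| \;\leq\; \int_A \Big(\tfrac{H_+ t}{n}+1\Big)^n d\mu \;=\; \int_{\Sigma\cap D}\Big(\tfrac{H_+ t}{n}+1\Big)^n d\mu.
\]
The whole corollary will then follow once I establish the inclusion
\[
\mathcal{S}_t \cap D \;\subseteq\; \mathcal{S}_t(A)
\]
and pass from areas to spacetime volumes via the coarea formula.

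To prove the inclusion I would argue as follows. Pick $p \in \mathcal{S}_t \cap D$. If $t = 0$ there is nothing to show, since the cut locus lies strictly in $I^+(\Sigma)$, so $\mathcal{S}_0 \cap D = \Sigma \cap D = A = \mathcal{S}_0(A)$. If $t > 0$, property~(3) of Section~2.3 gives a unique future directed unit speed normal geodesic $\gamma : [0,t] \to \mathcal{M}$ with $\gamma(0) = p_\Sigma \in \Sigma$ and $\gamma(t) = p$, realizing $\tau(p) = \tau(p_\Sigma, p) = t$. Since $\gamma$ is timelike and $t > 0$ we have $p_\Sigma \ll p$, so by the past-set property of $D$,
\[
p_\Sigma \in I^-(p) \subseteq I^-(D) \subseteq D,
\]
hence $p_\Sigma \in \Sigma \cap D = A$ and therefore $p \in \mathcal{S}_t(A)$, establishing the inclusion and hence \eqref{AreaEstD}.

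Finally, to deduce \eqref{VolEstD}, I would invoke the coarea formula on $\mathcal{M}$: since $\mathrm{Cut}(\Sigma)$ has $(n+1)$-dimensional measure zero and $\tau$ is smooth on $\mathcal{M}$ with past directed unit timelike gradient, removing the cut locus and slicing by level sets of $\tau$ gives
\[
|\Omega_t \cap D| \;=\; \int_0^t |\mathcal{S}_s \cap D|\, ds \;\leq\; \int_0^t \int_{\Sigma \cap D} \Big(\tfrac{H_+ s}{n}+1\Big)^n d\mu\, ds.
\]
The only genuinely delicate point of the argument is the inclusion step: I need to exploit that $D$ is defined via $I^-$ (not $J^-$) together with the strict timelikeness of the realizing geodesic for $t > 0$, in order to conclude $p_\Sigma \in I^-(p)$ rather than merely $p_\Sigma \in J^-(p)$. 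Everything else is a routine invocation of Lemma~\ref{lem:subsets} and the coarea formula.
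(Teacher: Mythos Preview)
Your proposal is correct and follows essentially the same route as the paper: reduce to Lemma~\ref{lem:subsets} with $A=\Sigma\cap D$ by proving the inclusion $\mathcal{S}_t\cap D\subseteq \mathcal{S}_t(\Sigma\cap D)$ via the past-set property, then obtain the volume bound from the area bound. Your treatment is in fact a bit more careful than the paper's (you separate out $t=0$ and spell out the coarea step for \eqref{VolEstD}), but the argument is the same.
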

\begin{proof}
	We only need to show that $\mathcal{S}_t \cap D\subseteq \mathcal{S}_t(\Sigma\cap D)$, then the claims follow from Lemma \ref{lem:subsets}. Let $x \in \mathcal{S}_t \cap D$. Then there exists a unique future directed maximizing timelike geodesic $\gamma$ from $\Sigma$ to $x$ which does not meet the cut locus. Since $D$ is a past set, $\gamma\subseteq I^-(x)\subseteq D$, so $\gamma$ starts in $\Sigma \cap D$ and thus $x\in \mathcal{S}_t(\Sigma\cap D)$.
\end{proof}%\hfill

Using \ref{lem:subsets} we can also show estimates \eqref{AreaEst} and \eqref{VolEst} for non-compact $\Sigma$ with finite area.

\begin{cor}\label{cor:noncompact}
Estimates  \eqref{AreaEst} and \eqref{VolEst} still hold even if the Cauchy hypersurface in Theorem \ref{thm:areaest} and Corollary \ref{cor:volest} is non-compact (as long as all other assumptions, in particular \eqref{Ricbound}, are still satisfied). 
\end{cor}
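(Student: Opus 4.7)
The plan is to reduce the non-compact case to the relatively compact case handled in Lemma \ref{lem:subsets} by exhausting $\Sigma$ with a nested sequence of relatively compact measurable subsets. Since $\Sigma$ is a smooth (hence second countable and $\sigma$-compact) manifold, one can choose an increasing sequence $A_1\subseteq A_2\subseteq\dots\subseteq \Sigma$ of relatively compact open (in particular, measurable) subsets with $\bigcup_k A_k = \Sigma$.

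First I would observe that for every such $A_k$, Lemma \ref{lem:subsets} yields
\[
|\mathcal{S}_t(A_k)| \leq \int_{A_k}\Big(\frac{H_+ t}{n}+1\Big)^n d\mu
\quad\text{and}\quad
|\Omega_t(A_k)| \leq \int_{A_k}\int_0^t \Big(\frac{H_+ s}{n}+1\Big)^n ds\, d\mu,
\]
since the Ricci bound along normal geodesics starting in $A_k\subseteq \Sigma$ is inherited from the corresponding bound on $\Sigma$. Next I would verify that the sets $\mathcal{S}_t(A_k)$ are monotone increasing in $k$ and that their union equals $\mathcal{S}_t$. Monotonicity is immediate from the definition $\mathcal{S}_t(A) = \{p\in\mathcal{S}_t : p_\Sigma\in A\}$; for the union, recall that every $p\in \mathcal{S}_t\subseteq \mathcal{M}$ has a unique base point $p_\Sigma\in \Sigma$, and since $\Sigma = \bigcup_k A_k$, we have $p_\Sigma \in A_k$ for some $k$, hence $p\in \mathcal{S}_t(A_k)$. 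Similarly $\Omega_t\cap \mathcal{M} = \bigcup_k \Omega_t(A_k)$ up to a set of measure zero (the cut locus).

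Applying the monotone convergence theorem on both sides then gives
\[
|\mathcal{S}_t| = \lim_{k\to\infty} |\mathcal{S}_t(A_k)| \leq \lim_{k\to\infty} \int_{A_k}\Big(\frac{H_+ t}{n}+1\Big)^n d\mu = \int_{\Sigma}\Big(\frac{H_+ t}{n}+1\Big)^n d\mu,
\]
which is exactly \eqref{AreaEst}, and the analogous limit argument (or alternatively the coarea formula applied to \eqref{AreaEst} as in the proof of Corollary \ref{cor:volest}) yields \eqref{VolEst}. The inequality is meaningful precisely when $\Sigma$ has finite area and $H_+$ has finite $L^n$-norm, in which case the right-hand side is finite by the Jensen-type expansion in Remark \ref{rem:meancurvrem}; otherwise the bound is simply vacuous.

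I do not anticipate any real obstacle here: the only mild subtlety is ensuring that $\bigcup_k \mathcal{S}_t(A_k)$ actually exhausts $\mathcal{S}_t$, which hinges on the uniqueness of base points for points in $\mathcal{M} = M\setminus \mathrm{Cut}(\Sigma)$ already recorded in the background section. Everything else is a routine exhaustion plus monotone convergence argument, with Lemma \ref{lem:subsets} doing all the geometric work.
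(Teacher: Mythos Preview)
Your proposal is correct and follows essentially the same route as the paper: exhaust $\Sigma$ by an increasing sequence of relatively compact sets, apply Lemma~\ref{lem:subsets} on each, and pass to the limit via monotone convergence (with the volume estimate then following from the coarea formula). The only cosmetic difference is that the paper phrases the exhaustion in terms of compact sets $K_i$ rather than relatively compact open sets, which is immaterial.
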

\begin{proof}
Let $K_i$ be an exhaustion by compact sets of $\Sigma$. Then, 
$\mathcal{S}_t(K_i)\subseteq \mathcal{S}_t(K_{i+1})$ and $\bigcup_{i=1}^\infty \mathcal{S}_t(K_i)= \mathcal{S}_t$. By Lemma \ref{lem:subsets} we have
\begin{align*}
|\mathcal{S}_t(K_i)| &\leq \int_{K_i} \Big(\frac{H_+ t}{n}+1\Big)^{n} d\mu \leq  \int_{\Sigma } \Big(\frac{H_+ t}{n}+1\Big)^{n} d\mu .
\end{align*}
So 
\be|\mathcal{S}_t|= |\bigcup_{i=1}^\infty \mathcal{S}_t(K_i)|=\lim_{i\to \infty} |\mathcal{S}_t(K_i)| \leq  \int_{\Sigma } \Big(\frac{H_+ t}{n}+1\Big)^{n} d\mu \ee
The estimate for the volume follows again from the coarea formula and integration.
\end{proof}

\br \label{rem:noncompactA} Using the same argument we see that requiring $A$ to be relatively compact in Lemma \ref{lem:subsets} was unnecessary and that it suffices that $A$ has finite area.
\er

 As a direct consequence of Corollary~\ref{cor:noncompact}, we obtain the following corollary.
 
\bc  If, under the assumptions of Theorem \ref{thm:areaest},  $\Sigma $ is non-compact but has finite area and $||H_+||_{L^n(\Sigma)}$ is bounded, then  the areas $|\mS_t |$ and volumes $|\Omega_t|$ remain bounded.
\ec

\section{Example: For $p<n$, bounds on the $L^p$-norm of $H_+$ are insufficient for the estimates \eqref{eq:constest}, \eqref{eq:constest3} } \label{Sect-Example}

In this last section we want to illustrate that bounds on the $L^p$-norm of $H$ are not sufficient for volume bounds analogous to \eqref{eq:constest} or \eqref{eq:constest3}. Specifically, our examples will contradict analogues to the "subset versions" of \eqref{eq:constest} or \eqref{eq:constest3} based on Lemma \ref{lem:subsets}: For any $p<n$ we will give a family of spacetime metrics $g_j$ on $M=[0,\infty )\times \mathbb{H}^n$ 
satisfying the assumptions of Lemma~\ref{lem:subsets} for compact $A_j\subseteq \mathbb{H}^n$ for which the $L^p(A_j)$-norms of $H_j$ remain uniformly bounded but for which $|\mS_t(A_j)|_{g_j}\to \infty$.

\begin{ex}\label{ex:ex1}\normalfont 
Fix $p<n$. Let $(\mathbb{H}^n,h_{-1})$ be standard hyperbolic space and fix two disjoint open relatively compact subsets $B_1,B_2\subseteq \mathbb{H}^n$ with $|B_1|,|B_2|\geq 1$. For $j\in \N $ let $A_{1,j}\subseteq B_1, A_{2,j}\subseteq B_2$ be compact with
\be |A_{1,j}|_{h_{-1}}=\frac{1}{2j} ,\qquad |A_{2,j}|_{h_{-1}}=1-\frac{1}{2j}
\ee
Set 
\begin{align}\beta_{1,j}&:=j^{\frac{1}{p}}\\
\beta_{2,j}&:=\big(\frac{1}{2-\frac{1}{j}}\big)^{\frac{1}{p}}
\end{align}
 and let $f_j:[0,\infty)\times \mathbb{H}^n \to (0,\infty)$ be a smooth function satisfying 
\begin{align} f_j(t,x)&=(t+\frac{n}{\beta_{1,j}}) \quad &\mathrm{for}\quad (t,x) \in [0,\infty)\times B_1 \\
 f_j(t,x)&=(t+\frac{n}{\beta_{2,j}}) \quad &\mathrm{for}\quad (t,x) \in [0,\infty)\times B_2
\end{align}
and let
\be
g_j:= -dt^2+f_j(t,x)^2 h_{-1}.
\ee 

Denoting the mean curvature of $\Sigma_j:=\{0\}\times \mathbb{H}^n\subseteq ([0,\infty )\times\mathbb{H}^n, g_j)   $ by $H_j$ we note that 
\begin{align}H_j|_{A_{1,j}}&=\beta_{1,j}=j^{\frac{1}{p}}>0\\
H_j|_{A_{2,j}}&= \beta_{2,j}=\big(\frac{1}{2-\frac{1}{j}}\big)^{\frac{1}{p}}>0.
\end{align}

Since normal geodesics to $\Sigma_j$ starting in $A_{1,j}$ (resp.~$A_{2,j}$) never leave $[0,\infty)\times B_1$ (resp.~$[0,\infty)\times B_2$) where $g_j$ is one of the model geometry metrics of Section~\ref{sec:compspaces}, we have
\[\Ric(\dot{\gamma},\dot{\gamma})=0,\]
i.e., the assumptions of Lemma~\ref{lem:subsets} are satisfied for $A_j:=A_{1,j}\cup A_{2,j}$ and hence 
by \eqref{eq:comparisonarea} we have
\begin{align}
|\mS_t(A_j)|&= |\mS_t(A_{1,j})|+ |\mS_t(A_{2,j})|\\
&=|A_{1,j}| \left( 1+ \frac{\beta_{1,j} t}{n} \right)^n+|A_{2,j}| \left( 1+ \frac{\beta_{2,j} t}{n} \right)^n \\
&= \frac{1}{2j}\Big(\frac{j^{\frac{1}{p}}t}{n}+1\Big)^n + \big(1-\frac{1}{2j}\big) \Big(\frac{t}{(2-\frac{1}{j})^{\frac{1}{p}}n}+1\Big)^n \to \infty
\end{align}
as $j\to \infty $. Note that 
\be ||H_j||_{L^p(A_j)}=\big(j|A_{1,j}|_{h_{-1}}+\frac{1}{2-\frac{1}{j}} |A_{2,j}|_{h_{-1}} \big)^\frac{1}{p}=1 , \ee
so the above in particular implies 
\be|\mS_t(A_j)| \not \leq   2^{n-1}   \left(\left(\tfrac{t}{n}\right)^n \,||H_j||_{L^p(A_j)}^n + {|\Sigma |} \right)\ee
for large $j$, so the "subset version" of \eqref{eq:constest} can't hold. The same immediately follows for the "subset version" of \eqref{eq:constest3}. 
\end{ex}

\begin{rmrk} \label{L^2-L^n}
In work of Bryden, Khuri, and the second author \cite{BKS21}, a theorem is proven showing that if the mass of a spherically symmetric sequence of initial data sets $(\Sigma_j, h_j, k_j)$ converges to $0$ then there are $L^p$ bounds on the second fundamental form for $p \in [1,2)$.  Thus one might want to obtain volume estimates in such a setting.   However the example above indicates that our approach to estimating the volume requires the stronger $L^n$ bound.  This would be interesting to study further perhaps requiring that the regions solve an Einstein Equation with a particular matter model.
\end{rmrk}

\bibliographystyle{plain}
\bibliography{Graf-Sormani-20}

\end{document}